\colorlet{texcscolor}{blue!50!black}
\colorlet{texemcolor}{red!70!black}
\colorlet{texpreamble}{red!70!black}
\colorlet{codebackground}{black!25!white!25}
\lstdefinestyle{siamlatex}{%
  style=tcblatex,
  texcsstyle=*\color{texcscolor},
  texcsstyle=[2]\color{texemcolor},
  keywordstyle=[2]\color{texemcolor},
  moretexcs={cref,Cref,maketitle,mathcal,text,headers,email,url},
}
\DeclareTotalTCBox{\code}{ v O{} }
{ 
  fontupper=\ttfamily\color{black},
  nobeforeafter,
  tcbox raise base,
  colback=codebackground,colframe=white,
  top=0pt,bottom=0pt,left=0mm,right=0mm,
  leftrule=0pt,rightrule=0pt,toprule=0mm,bottomrule=0mm,
  boxsep=0.5mm,
  #2}{#1}
\newtheorem{assumption}{Assumption}
\def\sS{\mathcal{S}}
\def\bR{\mathbb{R}}
\def\sI{\mathcal{I}}
\def\sL{\mathcal{L}}
\def\sF{\mathcal{F}}
\def\sO{\mathcal{O}}
\def\fl{f_{\rm low}}
\def\b1{{\bf 1}}
\def\rev#1{\textcolor{black}{#1}}
\def\revn#1{\textcolor{black}{#1}}
\def\Y{Y}
\def\sX{\mathcal{X}}
\def\sJ{\mathcal{J}}
\newcommand{\csol}{c_{\textnormal{sol}}}
\newcommand{\cnc}{c_{\textnormal{nc}}}
\newcommand{\bH}{\bar{H}}
\newcommand{\eps}{\epsilon}
\def\diag{\mathop{\hbox{\rm diag}}}
\def\be{\begin{enumerate}}
\def\bi{\begin{itemize}}
\def\ee{\end{enumerate}}
\def\ei{\end{itemize}}
\def\sgn{\mathop{\hbox{\rm sgn}}}
\newcommand{\Kpncg}{K_{\mbox{\rm\scriptsize pncg}}}
\newcommand{\Cmeo}{\mathcal{C}^{\mbox{\rm\scriptsize meo}}}
\newcommand{\Nmeo}{N^{\mbox{\rm\scriptsize meo}}}
\newcommand{\soda}{\text{\bf Def1}}
\newcommand{\sodb}{\text{\bf Def2}}
\newcommand{\sodc}{\text{\bf Def3}}
\newcommand{\sodd}{\text{\bf Def4}}
\newcommand{\sode}{\text{\bf Def5}}
\newcommand{\sodf}{\text{\bf Def7}}
\newcommand{\sodAA}{\text{\bf DefA}}
\newcommand{\sodBB}{\text{\bf DefB}}
\newcommand{\sodCC}{\text{\bf DefC}}
\def\sjw#1{\textcolor{blue}{\bf [SJW: #1]}}
\begin{document}

\title{Complexity of \rev{a Projected Newton-CG Method for Optimization with Bounds}
\thanks{A preliminary version of this work has been archived in the workshop ``Beyond First-Order Methods in ML Systems'' at the 37th International Conference on Machine Learning, Vienna, Austria, 2020. Research is supported from NSF Awards 1740707, 1839338, 1934612, and 2023239; Subcontract 8F-30039 from
Argonne National Laboratory; Award N660011824020 from the DARPA Lagrange Program; HKU-IDS start-up fund;and Guangdong Province Fundamental and Applied Fundamental Research Regional Joint Fund, 2022B1515130009. \rev{This work was submitted when the first author was a postdoctoral research associate at the Wisconsin Institute for Discovery at University of Wisconsin-Madison.}
}
}


\author{Yue Xie         \and
        Stephen J. Wright 
}


\institute{Yue Xie \at
              \rev{Department of Mathematics and Musketeers Foundation Institute of Data Science, The University of Hong Kong, Pokfulam, Hong Kong.}\\
              \email{yxie21@hku.hk}           
           \and
           Stephen J. Wright \at
              Computer Sciences Department, University of Wisconsin-Madison,  1210 W. Dayton St., Madison, WI, 53706. \\
              \email{swright@cs.wisc.edu} 
}

\date{Received: date / Accepted: date}

\maketitle

\begin{abstract}
\rev{This paper describes a method for solving smooth nonconvex minimization problems subject to bound constraints with good worst-case complexity guarantees and practical performance.
The method contains elements of two existing methods: the classical gradient projection approach for bound-constrained optimization and a recently proposed Newton-conjugate gradient algorithm for unconstrained nonconvex optimization.
Using a new definition of approximate second-order optimality parametrized by some tolerance $\epsilon$ (which is compared with related definitions from previous works), we derive complexity bounds in terms of $\epsilon$ for both the number of iterations required and the total amount of computation. The latter is measured by the number of gradient evaluations or Hessian-vector products. We also describe illustrative computational results on several test problems from low-rank matrix optimization.}
\keywords{Nonconvex Bound-constrained Optimization \and Complexity Guarantees \and Projected Gradient Method \and Newton's Method \and Conjugate Gradient Method}
\subclass{49M15 \and 68Q25 \and 90C06 \and 90C30 \and 90C60}
\end{abstract}

\section{\rev{Introduction}} \label{sec:intro}
We consider the problem
\begin{equation} \label{opt: bc0}
\min \, f(x)  \quad \mbox{subject to} \;\;  x \in \Omega,
\end{equation}
where $f: \bR^{n} \rightarrow \bR$ is twice continuously differentiable and is bounded below by $\fl>-\infty$ on the closed feasible set $\Omega$. We focus on $\Omega$ defined by nonnegativity constraints on a subset $\sI$ of the variables, that is,
\begin{equation} \label{def:O}
    \Omega \triangleq \{ x \in \bR^n \mid x^i \ge 0, \; i \in \sI \}, \;\; \mbox{where $\sI \subseteq \{1,2,\dotsc,n\}$.}
\end{equation}
Bounds are the simplest type of inequality constraint.
Euclidean projection onto the feasible set $\Omega$, a trivial operation when $\Omega$ is defined by bounds, is a fundamental component of several successful algorithms.
Bound-constrained subproblems often arise in algorithms for more complicated constrained optimization problems, such as augmented Lagrangian methods. 
\revn{Bound constraints also appear in popular problems such as nonnegative least-squares and nonnegative matrix factorization \cite{gillis2014and}.} 
\rev{Approaches of several types have been proposed for solving this problem, including 
gradient projection, active set methods, and interior-point methods. \revn{See \cite{nocedal2006numerical} for details.}}


In this paper, we describe a line-search method for solving \eqref{opt: bc0}, \eqref{def:O} that exploits the simplicity of Euclidean projection onto $\Omega$. It combines gradient projection with a Newton-conjugate gradient (Newton-CG) method  for smooth nonconvex unconstrained optimization proposed recently in \cite{Royer2019}. 
The elements of our method are well known for their good practical performance in various optimization contexts.
By combining these elements in the right way, and introducing judicious strategies for diagonal scaling, step length acceptance, and detection of negative curvature, we equip the method with a worst-case complexity theory that matches best-known theoretical bounds for bound-constrained optimization and even for unconstrained optimization. Preliminary numerical results confirm that the method has appealing practical performance.
In contrast to most previous works on complexity, we prove results for both iteration and {\em computational} complexity. The latter is measured in terms of two key operations: evaluation of a gradient at a given point, and computation of a Hessian-vector product involving an arbitrary vector. (The latter is known to cost a modest multiple of a gradient evaluation when computational differentiation techniques are used \cite{griewank2008evaluating}.) \revn{Our method does not require explicit calculation or storage of the Hessian; it accesses the Hessian only via products with given vectors.}

\paragraph{Background and Prior Work.}
\rev{There has been renewed interest in devising optimization algorithms with worst-case complexity guarantees for constrained nonconvex optimization. 
Interior-point type methods were developed to solve nonconvex problems with bound constraints \cite{Bian2015}, or with bounds and linear equality constraints \cite{Haeser2018}. A log-barrier method for bound-constrained problems was proposed in \cite{10.1093/imanum/drz074}.
Like the present paper, this method made use of the Newton-CG method of \cite{Royer2019}, but in a quite different way.
An adaptive cubic regularization algorithm was proposed in \cite{cartis2012adaptive} to solve nonconvex optimization with general convex constraints. 
Later, in  \cite{cartis2015evaluation}, the authors of  \cite{cartis2012adaptive} designed a novel two-phase target-following algorithm to address a more general problem class: nonconvex optimization with nonlinear equality constraints and a general convex feasible region.
They also generalize the concept of approximate first-order optimal point to arbitrary high-order and apply a conceptual high-order algorithm for obtaining such a point \cite{cartis2018second}. 
Authors of \cite{birgin2018regularization} outline a high-order algorithm that obtains approximate first-order optimal point of a nonconvex optimization problem with general constraints. }
\revn{The paper \cite{cartis2019universal} considered a high-order universal adaptive regularization algorithm to find approximate first-order optimal points for nonconvex problems with convex constraints, but they have even less stringent assumptions on the smoothness of the objective.
Specifically, they required $q$th-order derivatives to be H\"older continuous, and they obtained complexity results that depend on the degree of smoothness and / or the regularization power\footnote{Order of the regularization term. For example, a cubic regularization has power $3$.} (see details below). 
For high-order adaptive regularization methods,  \cite{chen2017partially} showed that the complexity may not be affected when a non-Lipschitz singular function ($l_p$-norm, $p \in (0,1)$) is introduced into the objective. 
Other works include \cite{nouiehed2020trust},  which uses a  trust region method  to locate second-order optimal point of nonconvex problems with linear constraints.} \rev{A Hessian barrier algorithm was recently proposed \cite{dvurechensky2021hessian}, based on self-concordant barrier functions, which solves nonconvex problems with general conic constraints and linear equality constraints. }

\rev{In these articles, good complexity results follow from the use of the Hessian and sometimes higher-order derivatives: $\sO(\epsilon^{-3/2})$ iteration/evaluation\footnote{\rev{Iteration complexity in this paper is a bound on the number of outer iterations in an algorithm. It is equivalent to evaluation complexity (a count of the number of evaluations of gradients, Hessians, or higher-order derivatives)  for purposes of this discussion.}} complexity to locate an $\epsilon$-approximate first-order optimal point \cite{cartis2012adaptive,cartis2015evaluation,birgin2018regularization} or a $(\epsilon,\sqrt{\epsilon})$ second-order optimal point \cite{Bian2015,Haeser2018,10.1093/imanum/drz074,dvurechensky2021hessian,nouiehed2020trust}. 
(Here $\epsilon$ and $\sqrt{\epsilon}$ represent the precision of first- and second-order optimality conditions, respectively.)
The $q$th-order algorithm in \cite{cartis2018second} locates an $\epsilon$-approximate $q$th-order solution in $\sO(\epsilon^{-(q+1)})$  iterations; while the $q$th-order algorithms in \cite{birgin2018regularization} finds an $\epsilon$-approximate first-order solution in $\sO(\epsilon^{-(q+1)/q})$ iterations}. \revn{The algorithm that exploits the $q$th-order Taylor model in \cite{cartis2019universal} locates the $\epsilon$-approximate first-order solution in $\sO(\epsilon^{-(q +\alpha)/(q + \alpha-1)})$ iterations under the assumption that the objective's $q$th-order derivative (with $q \ge 1$) is H\"older continuous with exponent $\alpha$ (with $0 < \alpha \le 1$) and the regularization power is high enough.}

\rev{Complexity results in the works discussed above focus on iteration/evaluation complexity; less attention is paid to the bounds on the total amount of computation required. In fact, these methods can require solution of nonconvex subproblems that may themselves require a significant and undetermined amount of computation. 
For example, in \cite{cartis2012adaptive,cartis2015evaluation}, a potentially \revn{expensive} cubic regularized subproblem (itself a constrained nonconvex problem) needs to be solved to approximate first-order optimality at each iteration, while the higher-order methods of \cite{cartis2018second}, \cite{birgin2018regularization}, \cite{cartis2019universal} and \cite{chen2017partially} require solution of subproblems involving higher-order derivatives. \revn{In \cite{nouiehed2020trust}, checking the second-order stationary condition can be NP-hard, and the constrained nonconvex subproblem needs to be solved to at least first-order stationary per iteration.}
Moreover, implementations of these methods may require explicit evaluation of the Hessian or higher-order derivatives.
The method of this paper, by contrast, requires explicit evaluation only of gradients; the Hessians are accessed only via Hessian-vector products. 
This fact allows us to define meaningful bounds on computational complexity.}

\rev{
The pursuit of optimal iteration/evaluation complexity results may compromise the practicality of algorithms. For example, subproblems in the second-order algorithms from \cite{Bian2015} and \cite{Haeser2018} have a small trust-region radius that depends on $\epsilon$. 
The log-barrier approach of \cite{10.1093/imanum/drz074} has unimpressive practical performance, as we see in Section~\ref{sec: num}.}

\rev{
Other works that address complexity of constrained nonconvex optimization, include \cite{curtis2018complexity}, which discusses the trust funnel algorithm to solve optimization with equality constraints;  \cite{xie2021complexity,grapiglia2021complexity,birgin2020complexity,sahin2019inexact}, which discuss augmented Lagrangian methods (ALM); and \cite{lin2022complexity}, concerning penalty methods.
In \cite{grapiglia2021complexity}, ALM and appropriate first-order algorithms to solve subproblems are utilized to locate $\epsilon$ approximate first-order point, with evaluation complexity arbitrarily close to $\sO(\epsilon^{-4})$. 
Complexity of a safeguarded ALM is derived in \cite{birgin2020complexity} to find first-order stationary points, but the cost of solving the subproblems  is not well defined. 
In \cite{lin2022complexity}, complexity results are established in terms of the number of proximal gradient steps needed to find an $\epsilon$ first-order stationary points. 
The complexity can be improved to $\sO(\epsilon^{-5/2})$ (omitting logarithm terms) when the constraint functions are convex and Slater's condition holds. \cite{curtis2018complexity,xie2021complexity,sahin2019inexact} consider optimization with equality constraints that do not accommodate the bound-constrained problem class \eqref{opt: bc0}, \eqref{def:O}. 
}

\rev{A complicating factor in comparing complexity of methods for finding approximate optimal points is that the definitions of such points vary between papers. This is not unexpected since different papers consider a variety of constraint types, and the approximate optimality conditions are adapted to the particular formulations. 
The relation between different definitions has not been discussed in any detail, even for the case of  optimization with bounds. 
We believe that a proper discussion facilitates a better understanding of the goals and characteristics of different algorithms.}


\paragraph{Approach and Contributions.}
\rev{We describe an algorithm for locating an approximate second-order point of the problem \eqref{opt: bc0},\eqref{def:O} that has good worst-case complexity bounds --- similar to the unconstrained case ($\Omega = \bR^n$ in \eqref{opt: bc0}) --- and is also practical.}

\rev{As a preliminary to our description of the algorithm, we state our definition of 
approximate second-order optimality, alongside four other definitions that have appeared in the literature.
These definitions are typically parametrized by a tolerance $\epsilon$.
We introduce a second parameter $p$ that represents the power of $\epsilon$ that determines the approximate condition involving the Hessian, and refer to the resulting conditions as ``$(\epsilon,p)$-second-order optimality" or  ``$(\epsilon,p)$-2o" for short. 
The alternative definitions that we discuss in this article are based on those from \cite{cartis2018second,Haeser2018,10.1093/imanum/drz074,Bian2015}, specialized to the bound-constrained problem \eqref{opt: bc0},\eqref{def:O}, with $\sI = \{1,\hdots,n \}$.
We make comparisons among all these definitions, using a new notion of ``essentially stronger''.}
\rev{Practical methods that make use of gradient projection and Newton scaling have yet to be considered seriously as methods with good complexity guarantees for bound-constrained problems.
Such methods exploit the simplicity of the projection operation for $\Omega$ in \eqref{def:O}, as well as the benefits of second-order information that have been shown in the unconstrained context. 
The two-metric projection framework proposed by Bertsekas \cite{bertsekas1982projected,bertsekas2014constrained} provides a potential framework, for appropriate choice of scaling matrix.}
This method takes steps of the form
\begin{equation} \label{eq:2mgp}
x_{k+1} \triangleq P(x_k - \alpha_k D_k \nabla f(x_k)),
\end{equation}
where $D_k$ is a symmetric positive definite matrix (with a certain structure defined below) and $P(z)$ is the projection onto the feasible set $\Omega$ in \eqref{def:O}, defined by
\begin{equation} \label{def:P}
[P(z)]^i = \begin{cases}
\max \{z^i,0 \} & \;\; i \in \sI, \\
z^i & \;\; \mbox{\rm otherwise.}
\end{cases}
\end{equation}
The matrix $D_k$ scales the free and active parts of the gradient differently, in a way that guarantees decrease in the objective function for sufficiently small positive steplengths $\alpha_k$.
Denoting a set of ``apparently-active'' components of $x_k$ by
\begin{equation}
    \label{def:Ik+}
    I_k^+ (\epsilon_k) \triangleq \{ i \in \sI \mid 0 \le x_k^i \le \epsilon_k, \; \nabla_i f(x_k) > 0 \},
\end{equation}
for small positive $\epsilon_k$, $D_k$ is assumed to be positive diagonal in the $I_k^+(\epsilon_k)$ components, that is, $D_k[i,j]=0$ \rev{if either $i$ or $j$ is  in $I_k^+(\epsilon_k)$ with $j \neq i$}, and $D_k[i,i]>0$ for all $i \in I_k^+(\epsilon_k)$.
The two-metric projection method can have rapid convergence when $f(x)$ is convex and the square submatrix of $D_k$ for the ``apparently-free" indices $i \notin I_k^+(\epsilon_k)$ is derived from the corresponding submatrix of the Hessian $\nabla^2 f(x_k)$. 
The complexity properties of this method in the setting of nonconvex $f$ are \rev{the subject of ongoing work}.

 Inspired by both two-metric gradient projection approach  and the  Newton-CG algorithm for unconstrained optimization described in \cite{Royer2019}, we propose a projected Newton-CG algorithm.
We show that the algorithm terminates within $\sO(\epsilon^{-3/2})$ iterations and outputs  an $(\epsilon,\tfrac12)$-2o point with high probability. 
In each iteration of the projected Newton-CG, we either (1) take a gradient projection step;  (2)  take a projected Newton-CG step, obtained via a capped CG procedure applied to the apparently-free components, or (3) take a projected step along a negative curvature direction of a diagonally scaled Hessian.
The operations required to calculate each type of step are well defined, and are similar to those used in \cite{Royer2019,doi:10.1137/17M1134329}.
These ``fundamental operations" are of two types: (1) a gradient calculation, and (2) computation of the product of the Hessian with an arbitrary vector --- an operation that does not require explicit computation or knowledge of the Hessian and that can be performed at roughly equivalent cost to a gradient evaluation; see \cite{griewank2008evaluating}.  
The other potentially significant computations are (1) function evaluations performed during the backtracking line searches, the number of which is bounded by an $O(\log \epsilon)$ multiple of the number of gradient evaluations, and which are usually significantly cheaper than gradient evaluations; and (2) vector operations involving vectors of length $n$ (inner products and saxpys), whose $\sO(n)$ cost is dominated by the cost of the fundamental operations for all functions of interest.
By contrast, other methods require solution of potentially expensive constrained nonconvex subproblems in each iteration \cite{birgin2018regularization,cartis2012adaptive,cartis2015evaluation,cartis2018second} and possibly explicit evaluation of Hessians and higher derivatives. \revn{These requirements have the  potential to make the computational complexity less competitive.}

\rev{Table~\ref{tab: complex.} shows iteration/evaluation complexity and operation complexity results for our algorithm (last row) and existing algorithms, based on their respective definitions of $(\epsilon,p)$-2o. 
The ``operation complexity'' results are upper bounds on the number of fundamental operations required to find an approximate solution. 
}

\begin{table}
\rev{
\footnotesize
  \caption{Complexity estimates for nonconvex optimization procedures involving bounds.}
  \label{tab: complex.}
  \begin{tabular}{clll}
  \toprule
Definition of $(\epsilon,p)$-2o$^*$ 
  & $ \begin{array}{l}
\mbox{Iteration/evaluation} \\
\mbox{Complexity }
\end{array} $  & $ \begin{array}{l}
\mbox{Operation} \\
\mbox{Complexity } (p = \tfrac12)
\end{array} $  & Ref. \\
  \hline
\eqref{eq:em2o-cartis}
   & $\sO(\epsilon^{-3})^{\star\star}$ $(p = 1)$
  & $-$ & \cite{cartis2018second}  \\
  \hline
 \eqref{eq:em2o-Haeser} & $\sO(\epsilon^{-3/2})$ $(p = 1/2)$ & $-$ & \cite{Haeser2018} \\
  \hline
 \eqref{eq:em2o-mike} & $\begin{array}{l}
 \tilde{\sO}(n\epsilon^{-1/2}+\epsilon^{-3/2})^\dagger \\
 (p = 1/2)
 \end{array}
 $
 & $ \begin{array}{l}
\tilde{\sO}(n\epsilon^{-3/4}+\epsilon^{-7/4}),  \mbox{$n$ large} \\
\tilde{\sO}(n\epsilon^{-3/2}),  \mbox{$n$ small}
\end{array} $  & \cite{10.1093/imanum/drz074} \\
 \hline
 \eqref{eq:em2o-Bian} & $\sO(\epsilon^{-3/2})$ $(p = 1/2)$ & $-$ & \cite{Bian2015} \\
 \hline
\eqref{eq:em2o} w. $\sI = \{1,\hdots,n\}$ &  $\sO(\epsilon^{-3/2})$ $(p = 1/2)$ & $\sO(\epsilon^{-3/2}\min\{n , \epsilon^{-1/4}\log(\frac{n}{\epsilon \delta}) \})$ & (here) \\
\bottomrule
\end{tabular}}

\rev{$*$: Definition of $(\epsilon,p)$-2o is based on the paper in ``Ref." but tailored to problem \eqref{opt: bc0},\eqref{def:O} with  $\sI = \{1,\hdots,n\}$.\\
$\star\star$: \rev{When $p = 1$, accuracy on the optimality condition involving Hessian is higher, leading to a higher complexity bound.} 
\\
$\dagger$: $\tilde{\sO}$ represents $\sO$ with logarithmic factors omitted.\\
}
\end{table}

Illustrative numerical experiments on nonnegative matrix factorization problems show that the projected Newton-CG algorithm has good practical performance: It contends well with gradient projection method and the log-barrier Newton-CG algorithm proposed in \cite{10.1093/imanum/drz074}, and is comparable to approaches that are specialized to this problem in relatively low dimensions.

With minor modifications (c.f. Appendix~\ref{app: 2sidebds}), the projected Newton-CG can be applied to problems with two-sided bounds, where $\Omega$ is redefined as $\{ x \in \bR^n \mid 0 \le x^i \le u^i, i \in \sI \}$, $\sI \subseteq \{ 1,2,\hdots,n \}$, with the same complexity guarantees.

\paragraph{Organization.} In Section~\ref{sec: Prelim}, we introduce some basic assumptions and definitions to be used throughout the article. 
Definitions of the approximate second-order optimal point in our work and others are discussed in Section~\ref{sec: approx-2o}. The projected Newton-CG is presented and analyzed in Section~\ref{sec: PNCG}. Section~\ref{sec: num} describes numerical experiments. Section~\ref{sec: con} contains some concluding remarks.

We include in the Appendix details of the relationship between different definitions of approximate second-order optimality, the oracles utilized in the projected Newton-CG algorithm, and extension to two-sided bounds.

 \section{Preliminaries} \label{sec: Prelim}
 
 We summarize here some notations, two assumptions used throughout the paper, and (exact) optimality conditions for \eqref{opt: bc0}, \eqref{def:O}.
 
 \paragraph{Notation.} We use subscripts for iteration numbers (usually $k$) throughout,
  and denote components of vectors by superscripts and components of matrices using square-bracket notation, with $[i,j]$ denotes the $i,j$ element. 
  We use the following notation for gradient and Hessian of $f$ at $x_k$:  
  \[
  g_k \triangleq \nabla f(x_k), \quad H_k \triangleq \nabla^2 f(x_k).
  \]
  We use $\nabla_i f(x)$ to denote the $i$th component of $\nabla f(x)$. $\diag(v)$ is a diagonal matrix with $v^i$ being its $[i,i]$ element. $\sgn(z) = 1$ if $z \ge 0$ and $\sgn(z) = -1$ otherwise. $\| \cdot \|$ denotes the 2-norm of a vector or a matrix. $c_+ \triangleq \max\{ c, 0 \}$ for a scalar $c \in \bR$. \rev{$\sI^c \triangleq \{ 1,\hdots,n \} \setminus \sI$. $P(\cdot)$ denotes the projection onto the feasible region $\Omega$.} 

\paragraph{Assumptions.}  
The following assumptions are used throughout the paper, though they are not mentioned explicitly in the statements of some lemmas.
\begin{assumption} \label{Ass: comp.lev.set}
The level set $\sL_f(x_0) \triangleq \{ x \in \bR^n  \mid x \in \Omega, \; f(x) \le f(x_0) \}$ is compact.
\end{assumption}
\begin{assumption} \label{Ass: 2Lipstz}
$f$ is twice Lipschitz continuously differentiable on an open convex set containing $\sL_f(x_0)$ and all the trial points generated by Algorithm~\ref{Alg: PNCG}.
\end{assumption}


Lipschitz constants for $f$, $\nabla f(x)$ and $\nabla^2 f(x)$ on the set described in Assumption~\ref{Ass: 2Lipstz} are denoted by $L_f$, $L_g$ and $L_H$, respectively. 
Thus, for any $x,v \in \bR^{n}$ such that $x$ and $x+v$ are in this set, we have
\begin{subequations}
\begin{align}
\label{ineq: T1}
& f(x + v) \le f(x) + L_f \| v \|, \\
\label{ineq: T2}
& f(x + v) \le f(x) + \nabla f(x)^Tv + \frac{L_g}{2} \| v \|^2, \\
\label{ineq: T3}
& f(x + v) \le f(x) + \nabla f(x)^T v + \frac{1}{2} v^T \nabla^2 f(x) v + \frac{L_H}{6} \| v \|^3.
\end{align}
\end{subequations}

\rev{Therefore, $\| \nabla f(x) \| \le L_f$ and $\| \nabla^2 f(x) \| \le L_g$ over $\sL_f(x_0)$.}

\paragraph{Optimality Conditions.}
We can write first-order optimality conditions for \eqref{opt: bc0}, \eqref{def:O} (also known as stationarity conditions) at a point $\bar x$ as follows:
\begin{align} \label{1o-stat}
\begin{aligned}
& \bar x^i \ge 0, \quad \nabla_i f(\bar x) \ge 0, \; \quad \forall i \in \sI; \\ 
& \nabla_i f(\bar x) = 0, \quad \forall i \in \sI^c \cup \{ i \in \sI \mid \bar x^i > 0 \}. 
\end{aligned}
\end{align}
A weak second-order condition for \eqref{opt: bc0}, \eqref{def:O}  is that the two-sided projection of $\nabla^2 f(\bar{x})$ onto the variables $i$ such that $\bar{x}^i>0$ or $i \in \sI^c$ is positive semidefinite, which is equivalent to
\begin{equation}
   \label{2o-stat}
z^T \nabla^2 f(\bar x) z \ge  0, \quad \forall z \in \{ z \in \bR^n \mid z^i = 0, i \in \{ i \in \sI \mid \bar x^i = 0 \} \}.
\end{equation}
This condition coincides with the usual second-order necessary condition where there are no ``degenerate'' indices, that is, indices $i \in \sI$ for which both $\bar{x}^i=0$ and $\nabla_i f(\bar{x}) = 0$. 
\rev{When such indices exist, a standard second-order necessary condition is:
\begin{align*}
z^T \nabla^2 f(\bar x) z \ge  0, \quad \forall z \in \left\{ z \in \bR^n \Big| 
\begin{array}{cc}
    z^i = 0, & \mbox{if} \; i \in \sI, \bar x^i = 0, \nabla_i f(\bar x) > 0,\\
    z^i \ge 0 & \mbox{if} \; i \in \sI, \bar x^i = 0, \nabla_i f(\bar x) = 0.
\end{array}
\right\}.
\end{align*}
However, checking this condition can be as hard as checking copositivity of a matrix, which is NP-hard.} Thus, as in previous works (such as \cite{10.1093/imanum/drz074}), we base our analysis on the less stringent condition \eqref{2o-stat}.

\section{\rev{Approximate second-order optimal points}}\label{sec: approx-2o}

In this section we give our definition of $(\epsilon,p)$-approximate second-order optimal points and compare it with similar definitions in the literature. For simplicity of notation, we use $(\epsilon,p)$-2o points to denote $(\epsilon,p)$-approximate second-order optimal points. We assume $\epsilon,p > 0$ throughout.


Our definition of an $(\epsilon,p)$-2o point is as follows.
\begin{definition}[($\epsilon,p$)-2o, \soda] \label{def:em2o}
 $x$ is an ($\epsilon,p$)-2o point of \eqref{opt: bc0},\eqref{def:O} according to $\soda$ if $x \in \Omega$ and for sets $J^+$ and $J^-$ defined by
\begin{align*}
    J^+ & \triangleq \{ i \in \sI \mid 0 \le x^i \le \sqrt{\epsilon} \}, \\
J^- & \triangleq \{1,\hdots,n\} \setminus J^+ =  \sI^c \cup \{ i \in \sI \mid x^i > \sqrt{\epsilon} \},
\end{align*}
and for  diagonal matrix $S=\diag(s)$ with $s^i= 1$ when $i \in J^-$ and $s^i = x^i$ when $i \in J^+$, we have
\begin{subequations} \label{eq:em2o}
\begin{align}\label{eq:em2o.g}
 & \| S \nabla f(x) \| \le 2\epsilon,\quad \nabla_i f(x) \ge - \epsilon^{3/4}, \, \mbox{\rm for all $i \in J^+$,} \\
 \label{eq:em2o.hess}
 & S \nabla^2 f(x) S \succeq - \epsilon^p I.
\end{align}
\end{subequations}
\end{definition}

Definition~\ref{def:em2o} is motivated by the (weak) second-order optimal conditions \eqref{1o-stat} and \eqref{2o-stat}. In fact, if we let $\epsilon = 0$, then the $(0,p)$-2o point satisfies \eqref{1o-stat} and \eqref{2o-stat} exactly. The following lemma further justifies Definition~\ref{def:em2o} and our purpose to find an $(\epsilon,p)$-2o point given small $\epsilon$. 
\begin{lemma}
Consider problem \eqref{opt: bc0},\eqref{def:O}. Suppose we have a positive scalar sequence $\{ \epsilon_k \}$ with $\epsilon_k \downarrow 0$ and vector sequence $\{ x_k \} \subseteq \Omega$ with  $x_k \rightarrow x^*$ such that $x_k$ is an $(\epsilon_k,p)$-2o point according to Definition~\ref{def:em2o}. Then $x^*$ satisfies second-order optimal conditions \eqref{1o-stat}, \eqref{2o-stat}. 
That is, for sets $\sJ_*^-$ and $\sJ_*^-$ defined by
\[
\sJ_*^- \triangleq \sI_c \cup \{ i \in \sI \mid (x^*)^i > 0 \}, \quad 
\sJ_*^+ \triangleq \{1,2,\hdots,n\} \setminus \sJ_*^-,
\]
we have 
\begin{subequations} \label{eq:xstar-stat}
\begin{align}
\label{xstar-stat1}
    (x^*)^i \ge 0, \quad  \nabla_i f(x^*) \ge 0, \quad & \forall i \in \sI; \\
    \label{xstar-stat2}
     \nabla_i f(x^*) = 0, \quad & \forall i \in \sJ_*^-; \\
    \label{xstar-stat3}
     z^T \nabla^2 f(x^*) z \ge 0, \quad & \forall z \in \{ z \in \bR^n \mid z^i = 0, \, i \in \sJ_*^+  \}.
\end{align}
\end{subequations}
\end{lemma}
\begin{proof}
Denote sets $\sJ_k^+$, $\sJ_k^-$ and diagonal matrix $\sS_k = \diag(s_k)$ which correspond to $J^+$, $J^-$, and $S$ in Definition~\ref{def:em2o} with  $x=x_k$, $\epsilon=\epsilon_k$ and $s = s_k$. Note that since $x_k \to x^*$ and $\epsilon_k \downarrow 0$, there exists $\bar k$ such that for any $k > \bar k$, we have
$\sJ_k^+ \subseteq \sJ_*^+$, $\sJ_*^- \subseteq \sJ_k^-$. 
Our claim that $x^*$ satisfies \eqref{eq:xstar-stat} is a consequence of the following four observations.
\begin{itemize}
    \item[(i)] Feasibility of $x^*$ follows from closedness of $\Omega$. 
    \item[(ii)] For any $i \in \sI$ and any $k$, either $i \in \sJ_k^+$ so $\nabla_i f(x_k) \ge -\epsilon_k^{3/4}$, or $i \in \sJ_k^-$ so $| \nabla_i f(x_k) | \le 2 \epsilon_k \implies \nabla_i f(x_k) \ge - 2 \epsilon_k$. 
By taking limits, we have $\nabla_i f(x^*) \ge 0$. 
    \item[(iii)] Fix any $i \in \sJ_*^-$. For all $k > \bar k$, we have $i \in \sJ_k^-$. Therefore, $s_k^i = 1$ and $| \nabla_i f(x_k)| \le 2 \epsilon_k$. 
By taking limits, we have $\nabla_i f(x^*) = 0$. 
    \item[(iv)] Fix any $z \in \{ z \in \bR^n \mid z^i = 0, i \in \sJ_*^+ \}$. For all $k > \bar k$, we have $i \in \sJ_k^+ \implies i \in \sJ_*^+ \implies z^i=0$, so that  $\sS_k z = z$. 
Since $z^T \sS_k \nabla^2 f(x_k) \sS_k z \ge - \epsilon_k^p \| z \|^2$ for any $k$, we have by taking limits that $z^T \nabla^2 f(x^*) z \ge 0$. 
\end{itemize}\qed
\end{proof}


We now identify  several definitions of approximate second-order optimal conditions proposed in literature and discuss their relationship. For simplicity, we assume in the rest of this section that 
\begin{align}\label{def:sI}
    \sI \triangleq \{1,2,\hdots,n \},
\end{align}
(so that $\Omega= \bR^n_+$, the nonnegative orthant).
When we refer to Definition~\ref{def:em2o} or $\soda$ in the rest of this section, we implicitly assume that \eqref{def:sI} holds.

We start from a definition in \cite{cartis2018second}, which is defined for optimization with general convex constraints and high-order optimal points. Here we tailor it to fit the scope of this paper: second-order optimal points and bound-constrained optimization: \eqref{opt: bc0}, \eqref{def:O}, \eqref{def:sI}.

\begin{definition}[\cite{cartis2018second}, \sodb]\label{def:em2o-cartis}
$x$ is an $(\epsilon,p)$-2o point of \eqref{opt: bc0}, \eqref{def:O}, \eqref{def:sI} according to $\sodb$ if $x \ge 0$ and, for some user-defined constant $\Delta_{\max}$ that is independent of $x$ and $\epsilon$, there exists $\Delta \in (0,\Delta_{\max}]$ such that
\begin{align}\label{eq:em2o-cartis}
\begin{aligned}
        \left| \mbox{globalmin}_{x + d \in \Omega, \| d \| \le \Delta} \quad  \nabla f(x)^T d  \right| &\le \Delta \epsilon, \\
    \left| \mbox{globalmin}_{x + d \in \Omega, \| d \| \le \Delta} \quad  \nabla f(x)^T d + \frac{1}{2}d^T \nabla^2 f(x) d \right| &\le \Delta^2 \epsilon^p.
\end{aligned}
\end{align}
\end{definition}

$\Delta_{\max}$ is often chosen to reduce the effort in global minimization.

The following three definitions are from \cite{Haeser2018,10.1093/imanum/drz074,Bian2015} tailored to our problem of interest. Here we let $X = \diag(x)$, $\bar X = \diag(\min\{x,{\bf 1}\})$ and ${\bf 1}$ denotes the vectors with all elements being $1$.

\begin{definition}[\cite{Haeser2018}, \sodc]\label{def:em2o-Haeser} $x$ is an $(\epsilon,p)$-2o point of \eqref{opt: bc0}, \eqref{def:O}, \eqref{def:sI} according to $\sodc$ if
\begin{align}\label{eq:em2o-Haeser}
    \begin{aligned}
    x \ge 0, \; \nabla f(x) \ge -\epsilon {\bf 1}, \; \| X \nabla f(x) \|_\infty &\le \epsilon, \\
  X \nabla^2f(x) X & \succeq - \epsilon^p I_n.
\end{aligned}
\end{align}
\end{definition}

\begin{definition}[\cite{10.1093/imanum/drz074}, \sodd]\label{def:em2o-mike} $x$ is an $(\epsilon,p)$-2o point of \eqref{opt: bc0}, \eqref{def:O}, \eqref{def:sI} according to $\sodd$ if
\begin{align}\label{eq:em2o-mike}
    \begin{aligned}
    x \ge 0, \; \nabla f(x) \ge - \epsilon {\bf 1}, \; \| \bar{X} \nabla f(x) \|_\infty & \le \epsilon,  \\
    \bar{X} \nabla^2 f(x) \bar{X} & \succeq -\epsilon^p I_n.
    \end{aligned}
\end{align}
\end{definition}

\begin{definition}[\cite{Bian2015}, \sode]\label{def:em2o-Bian} $x$ is an $(\epsilon,p)$-2o point of \eqref{opt: bc0}, \eqref{def:O}, \eqref{def:sI} according to $\sode$ if
\begin{align}\label{eq:em2o-Bian}
\begin{aligned}
      x \ge 0, \;
  \| X \nabla f(x) \|_\infty & \le \epsilon, \\ 
  X \nabla^2 f(x) X & \succeq - \epsilon^p I_n.
\end{aligned}
\end{align}
\end{definition}

The relationship between each of these definitions and second-order criticality has been discussed in the respective work. 
In order to discuss the relation between any two of these definitions including ours, we propose the following concept, which relates pairs of definitions of $(\epsilon,p)$-2o under the assumption that $x$ is confined to a compact set $\sX$.

\begin{definition}\label{def:essstr}
We say that $\sodAA$ is {\bf essentially stronger} than $\sodBB$ on $\sX$ if given any sufficiently small $\epsilon \in (0,1]$, any $(\epsilon,p)$-2o point $x \in \sX$ by $\sodAA$ is also a $(c\epsilon,p)$-2o point by $\sodBB$, where $c > 0$ is a constant independent of $\epsilon$ or $x$. We denote this relation as $\sodAA \gtrsim_{f,\sX,p} \sodBB$, simplified as $\sodAA \gtrsim \sodBB$. 
We say that $\sodAA$ and $\sodBB$ are {\bf essentially equivalent} (denoted $\sodAA \thickapprox \sodBB$) if $\sodAA \gtrsim \sodBB$ and $\sodBB \gtrsim \sodAA$.
\end{definition}

Transitivity of the relation $\gtrsim$ is shown in Lemma~\ref{def:trans}. 

Comparison and evaluation of complexity of different algorithms makes more sense if we are able to relate the guarantees on the points they produce according to the relations in Definition~\ref{def:essstr}. In fact, if we care most about the complexity as a function of the accuracy parameter $\epsilon$, Definition~\ref{def:essstr} is natural and intuitive due to the following theorem.
\begin{theorem}
Given any $\epsilon > 0$ sufficiently small, suppose that an algorithm can find an $(\epsilon,p)$-2o point $x \in \sX$ by $\sodAA$ in $\sO(\epsilon^{-q})$ iterations ($q > 0$) and $\sodAA \gtrsim \sodBB$. Then the algorithm can also locate an $(\epsilon,p)$-2o point by $\sodBB$ in $\sO(\epsilon^{-q})$ iterations.
\end{theorem}
\begin{proof}
Since $\sodAA \gtrsim \sodBB$, there is a constant $c>0$ such that for all $\epsilon > 0$ sufficiently small, an $(\epsilon/c,p)$-2o point by $\sodAA$ is an $(\epsilon,p)$-2o point by $\sodBB$. 
By assumption, the algorithm can an locate $(\epsilon/c,p)$-2o point by $\sodAA$ in $\sO((\epsilon/c)^{-q}) = \sO(\epsilon^{-q})$ number of iterations. The result follows. \qed
\end{proof}



We can now clarify several pairwise relations between the Definitions~\ref{def:em2o}-\ref{def:em2o-Bian}. The proof of the following result appears in Appendix~\ref{app:2o}.
\begin{theorem} \label{thm: defcomp}
Suppose that $\sX$ is a compact set. Then we have the following.
\be
\item[(1)] $\sodb\gtrsim\sodc$.
\item[(2)] $\sodc\thickapprox \sodd$.
\item[(3)] $\sodd \gtrsim \sode$.
\item[(4)] $\soda \gtrsim \sode$.
\ee
\end{theorem}


The assumption in Theorem~\ref{thm: defcomp} on compactness of $\sX$ is mild. In fact, many works in literature assume that the iterates generated by their algorithms lie in a compact region, for example, the sublevel set of the objective function.
By Theorem~\ref{thm: defcomp}, we have the following relation chart of Definition~\ref{def:em2o}-Definition~\ref{def:em2o-Bian}:
\begin{align*}
        \sodb \gtrsim \sodc  \thickapprox \sodd \gtrsim \sode, \quad 
    \soda  \gtrsim \sode.
\end{align*}
Note that each $\gtrsim$ relation above is probably strict. 
For example, $\sodb$ considers the global minimum of the first-order and second-order Taylor expansions of $f$ over a small trust region, 
while $\sodc$ (in fact all other definitions) is only closely related to the {\it weak} second-order necessary conditions \eqref{1o-stat},\eqref{2o-stat} for $x$ being a local minimal point. 
$\sode$ is weaker than others since it does not offer an appropriate lower bound on $\nabla_i f(x)$ when $x^i = 0$. In fact, the relation between $\sode$ and second-order criticality is also weaker than others. Unfortunately, we cannot describe by $\gtrsim$ the relation between our definition ($\soda$) with definitions other than $\sode$. On one hand, the condition $\nabla_i f(x) \ge -\epsilon^{-3/4}, i \in J^+$ in $\soda$  is weaker; on the other hand, the condition $\| S \nabla f(x) \| \le 2 c \epsilon$ is strong and cannot be implied by other $(\epsilon,p)$-2o definitions for any constant $c$ independent of $\epsilon$. An illustrative example is given in Appendix~\ref{app:2o}, Example~\ref{eg1}. 

\revn{During the review process,  we found that the definition used in \cite{nouiehed2020trust} is also relevant. When tailored to the scope in this paper (see Definition~\ref{def: nouiehed2020trust} in Appendix~\ref{app:2o}), it can be placed between $\sodb$ and $\sodc$ (see Theorem~\ref{thm: nouiehed2020trust} Appendix~\ref{app:2o}).}

\section{Projected Newton-CG method and its complexity} \label{sec: PNCG}

We now describe a projected Newton-CG algorithm to find an $(\epsilon,\tfrac12)$-2o point according to Definition~\ref{def:em2o} for problem \eqref{opt: bc0}, \eqref{def:O}, and analyze its complexity properties.


\subsection{Description of the Algorithm}

Given the sequence of iterates $\{ x_k \}$ and a positive scalar
sequence $\{ \epsilon_k \}$ we define the following index sets inspired by the two-metric projection method \eqref{eq:2mgp}, \eqref{def:Ik+}:
\begin{equation} \label{eqLdefJk}
\begin{aligned}
  J_k^+  & \triangleq \{ i \in \sI \mid 0 \le x_k^i \le \epsilon_k \}, \\
  J_k^- & \triangleq \{1,\hdots,n \} \setminus J_k^+ = \sI^c \cup \{ i \in \sI \mid
x_k^i > \epsilon_k \}.
\end{aligned}
\end{equation}
Let $g^-_k$, $H^-_k$ be the subvector and square submatrix of $g_k$ and $H_k$, resp., corresponding to index set $J_k^-$. 
Similarly, we use $g^+_k$ and $H^+_k$ for the subvector and square submatrix of $g_k$ and $H_k$, resp., corresponding to index set $J_k^+$.
For search direction $d_k$, denote $d_k^-$ and $d_k^+$ in the same fashion.
Define the scaling vector  $s_k$ and diagonal scaling matrix $S_k$ as follows:
\begin{equation} \label{eq:Sk.def}
 s^i_k \triangleq \begin{cases}  x_k^i, \; & i \in J_k^+ \\
     1, \; & i \in J_k^-
     \end{cases}, \quad
     S_k \triangleq \diag(s_k).
 \end{equation}
We can then define the projected Newton-CG algorithm as Algorithm~\ref{Alg: PNCG}.
 
\begin{algorithm} 
\caption{Projected Newton-CG (PNCG)}\label{Alg: PNCG}
\begin{algorithmic}
\STATE {\bf (Initialization)} Choose an initial point $x_0 \ge 0$, tolerance $\epsilon_g > 0$, scalar sequence $\{ \epsilon_k \}$ with $\epsilon_k  \in (0,1)$ for all $k$, backtracking parameters $\theta \in (0,1)$, accuracy parameter $\zeta \in (0,1)$, step acceptance parameter $\eta \in \left(0, \frac{1- \zeta}{2} \right)$.  
\FOR{$k=0,1,2,\dotsc$}
\IF{$J_k^+ \neq \emptyset$ \AND ( $g_k^i < -\epsilon_k^{3/2}$ for some $ i \in J_k^+$ \OR $\| S^+_k g^+_k \| > \epsilon_k^2$ )}
  \STATE {\bf (Gradient Projection step)} Let  $ d_k := - g_k$;
   \STATE Let $\tilde m_k$ be the smallest nonnegative integer $m$ such that 
  \[
  f( P(x_k+ \theta^m d_k) ) < f( x_k ) - \frac{1}{2} (x_k - P(x_k+\theta^m d_k))^T g_k;
  \]
\STATE Let $ x_{k+1} := P(x_k+ \theta^{\tilde m_k} d_k) $; 
\ELSIF{ $J_k^- \neq \emptyset$ \AND $\| g^-_k \| > \epsilon_g$}
\STATE{{\bf (Newton-CG step)} Call Algorithm~\ref{alg:ccg} (Capped CG, \rev{Appendix~\ref{app: capped-CG}}) with $H := H^-_k$, $\epsilon := \epsilon_k$, $g := g^-_k$, accuracy parameter $\zeta$ and upper bound $M$ on Hessian norm (if provided). Obtain outputs $t \in \bR^{|J_k^-|}$ and $d{\_\rm type}$;}
\IF{$d\_{\rm type} = {\rm NC}$}
\STATE Let $d^-_k := -{\rm sgn}(t^T g^-_k)
  \frac{ | t^T H^-_k t | }{ \| t \|^2 } \frac{t}{\| t \|} $; (Negative curvature direction) 
\ELSE
\STATE Let $d^-_k := t$; (Approx. solution to reduced Newton equations) 
\ENDIF 
\STATE Let $d^+_k=0$ (Complete $d_k$ with zeros in the active components)
\STATE Let $m_k$ be smallest nonnegative integer $m$ such that 
\[ 
f( P(x_k+\theta^m d_k) ) < f(x_k) - \eta \theta^{2m} \epsilon_k \| d_k \|^2;
\]
\STATE Let $\alpha_k := \theta^{m_k}$, $x_{k+1} := P(x_k+\theta^{ m_k}d_k)$;
\ELSE 
\STATE Call Procedure~\ref{alg:meo} (Minimum Eigenvalue Oracle (MEO), \rev{Appendix~\ref{app: MEO}}) with $H := S_k H_k S_k$, $\epsilon := \epsilon_k$ and the upper bound of norm of $H$ if known. 
  \IF{Procedure~\ref{alg:meo} certifies that $S_k H_k S_k \succeq - \epsilon_k I$}
  \STATE STOP and output $x_k$;
  \ENDIF
  \STATE {\bf (Negative curvature step)} Let $d_k := - \sgn( g_k^T S_k d ) \cdot | d^T S_k H_k
  S_k d | \cdot d $, where $d$ is the output of Procedure~\ref{alg:meo};
  \STATE Let $\bar m_k$ be the smallest nonnegative integer $m$ such that
  \[
  f( P(x_k + \theta^m S_k d_k ) ) < f(x_k) - \eta \theta^{2m} \| d_k \|^3.
  \]
  \STATE Let $x_{k+1} := P(x_k + \theta^{\bar m_k} S_k d_k ) $; 
\ENDIF
\ENDFOR
\end{algorithmic}
\end{algorithm}


\paragraph{Elements of Algorithm~\ref{Alg: PNCG}.} 
\rev{As in the two-metric projection method \eqref{eq:2mgp}, our method starts each iteration by partitioning the components of $x$ into the ``apparently-free" and ``apparently-active" indices based on their proximity to the boundary and a threshold parameter $\epsilon_k$. Then one of three types of steps is taken. For all such steps, backtracking in combination with projection onto the feasible set is used to determine an appropriate steplength.
\begin{itemize}
    \item {\bf Gradient projection step:} If examination of the gradient components corresponding to the apparently-active components indicate that a significant improvement in $f$ can be obtained by taking a standard gradient projection step, such a step is taken.
    \item {\bf Newton-CG step on apparently-free components:} When the gradient corresponding to the apparently-free components is above the threshold $\epsilon_g$, the Capped CG procedure (c.f. Appendix~\ref{app: capped-CG}) is called to either find an approximate Newton step in these components, or else return a direction of negative curvature. Only the apparently-free components are modified in a step of this type.
    \item {\bf Scaled negative curvature step (full-dimensional):} When neither of the two types of steps defined above is deemed appropriate, the current iterate $x_k$ satisfies the approximate optimality conditions of Definition~\ref{def:em2o}, except for the condition \eqref{eq:em2o.hess} on the scaled Hessian. We therefore check this condition and, if it is not satisfied, find a scaled negative curvature step that will lead to a significant decrease in $f$. While the other type of negative curvature step (obtained from Capped CG) changes only the apparently-free components, this scaled negative curvature step changes {\em all} components, in general. We believe that this type of step will rarely be taken; most instances of negative curvature will be detected during computation of the Newton-CG step.
\end{itemize}
}

\paragraph{Connections to known methods for bound-constrained and unconstrained optimization.}
\rev{The way in which Algorithm~\ref{Alg: PNCG} combines Newton-CG steps with gradient projection steps is inspired in part by
Mor{\'e} and Toraldo \cite{More91GPCG}, who use CG iterations applied to the Newton system to ``explore" a face of the feasible orthant and gradient projection to move to a new face. However, \cite{More91GPCG} addresses only convex quadratic problems and has no complexity analysis.}



\rev{There are obvious connections between Algorithm~\ref{Alg: PNCG} and the Newton-CG methods for unconstrained nonconvex optimization described in \cite{Royer2019} and \cite{doi:10.1137/17M1134329}. 
The latter methods make use of Capped CG procedures (where the "cap" refers to an implicit bound on the number of CG iterations allowed at each invocation), as well as negative curvature directions and backtracking line searches.
We leverage the similarities by using the same ``subroutines" for Capped CG and negative curvature detection as in \cite{Royer2019}; these methods are stated for completeness in Appendices~\ref{app: capped-CG} and \ref{app: MEO}, along with their key properties.
However, the modifications required to adapt the approach of \cite{Royer2019} to handle bound constraints, in a way that allows complexity results to be proved, are significant and non-obvious.
For one thing, we cannot simply project the approximate Newton step onto the feasible region, as this  may not yield descent even for convex $f$; see \cite[Section 1.5]{bertsekas2014constrained}. Indeed, Bertsekas proposed the two-metric gradient projection approach precisely to deal with this issue.
Essentially, the proximity of iterates $x_k$ to the boundary of the feasible set $\Omega$ and the use of projection inhibit steps in ways that may prevent the ``significant decrease" in objective $f$ required at each iteration to prove complexity. 
We need to use scaling of steps and Hessians, modified steplength acceptance criteria, and novel partitions of the set of components to overcome this potential hazard.
Differences with prior work, particularly the unconstrained Newton-CG approach of \cite{Royer2019}, can be summarized as follows.
\begin{enumerate}
    \item Our partition of $\{1,2,\dotsc,n\}$ into apparently-active and apparently-free parts \eqref{eqLdefJk} differs from standard two-metric gradient projection in not considering the sign of the gradient.
    \item We use a gradient projection step in certain conditions; devising these conditions in such a way that the step yields the significant improvement in $f$ required by our complexity analysis (see Lemma~\ref{lm: gradproj}) is somewhat intricate.
    \item We utilize a different sufficient decrease criterion for the Newton-CG step from the one in \cite{Royer2019}, and this step takes place only in the subspace of apparently-free  variables. The analysis in proofs of Lemmas~\ref{lm: ncdec} and \ref{lm: soldec} is similar to that of corresponding results in \cite{Royer2019}, but takes the presence of bound constraints in the apparently-free variables into account.
    \item We compute the full-dimensional negative curvature direction on a {\em diagonally scaled} version of the Hessian, and need a scaled direction and a different sufficient decrease condition from \cite{Royer2019}.
\end{enumerate}
}


\subsection{Complexity of Algorithm~\ref{Alg: PNCG}}

The following  four results --- Lemmas~\ref{lm: gradproj} to \ref{lm: meostep} --- prove a lower bound on the amount of decrease in $f$ at a single iteration in each of the following four cases. (We assume that Assumptions~\ref{Ass: comp.lev.set} and \ref{Ass: 2Lipstz} hold with $\Omega$ in \eqref{def:O} for all these results, although we do not mention them in the statement of each result.)
\begin{itemize}
    \item[(i)] A gradient projection step is taken (Lemma~\ref{lm: gradproj});
    \item[(ii)] The Newton-CG step is triggered and the Capped CG algorithm returns $d\_{\rm type} = {\rm NC}$, resulting in a negative curvature step involving the apparently-free components (Lemma~\ref{lm: ncdec});
    \item[(iii)] The Newton-CG step is triggered and the  Capped CG algorithm returns $d\_{\rm type} = {\rm SOL}$, resulting in a Newton-like step (Lemma~\ref{lm: soldec});
    \item[(iv)] The MEO procedure returns a negative curvature direction instead of a certificate of optimality, and a negative curvature step is taken (Lemma~\ref{lm: meostep}).
\end{itemize}
We state and prove these results without further elaboration.

\begin{lemma} \label{lm: gradproj}
Suppose that $J_k^+ \neq \emptyset$ at iteration $k$, and that  $ g_k^i < -\epsilon_k^{3/2} $ for some $i \in J_k^+$ or $ \| S^+_k g^+_k \| > \epsilon_k^2 $, so that a projected gradient step is taken. Then
\begin{align*}
f(x_k) - f(x_{k+1}) > \frac{1}{4} \min\{ \theta/L_g, 1 \} \epsilon_k^3.
\end{align*}
\end{lemma}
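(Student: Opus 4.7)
The proof splits naturally into two phases: first, lower-bound the accepted stepsize $\alpha_k = \theta^{\tilde m_k}$; second, turn the Armijo-type acceptance inequality into the claimed decrease via the trigger condition. For the stepsize lower bound, I would combine the descent lemma \eqref{ineq: T2} applied to $v = x_k(\theta^m) - x_k$ with the standard projection inequality $-g_k^T(x_k(\alpha) - x_k) \ge \|x_k(\alpha) - x_k\|^2/\alpha$, obtained by setting $y = x_k$ in the variational characterization of $x_k(\alpha) = P(x_k - \alpha g_k)$. These yield that whenever $\alpha < 1/L_g$,
\[
f(x_k(\alpha)) - f(x_k) + \tfrac{1}{2}(x_k - x_k(\alpha))^T g_k \;\le\; \tfrac{1}{2}\bigl(L_g - 1/\alpha\bigr)\|x_k(\alpha) - x_k\|^2 \;<\; 0,
\]
so the acceptance test is satisfied. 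Backtracking therefore terminates with $\tilde m_k < +\infty$ and $\alpha_k \ge \alpha^* := \min\{1, \theta/L_g\}$ (the trigger condition rules out the degenerate case $x_k(\alpha) \equiv x_k$).

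From the acceptance inequality, $f(x_k) - f(x_{k+1}) > \tfrac{1}{2}\phi(\alpha_k)$, where $\phi(\alpha) := (x_k - x_k(\alpha))^T g_k = -g_k^T(x_k(\alpha) - x_k)$. A useful preliminary is that $\phi$ is nondecreasing in $\alpha \ge 0$: coordinate by coordinate, each term equals either $\alpha (g_k^i)^2$ (when projection is inactive at $i$) or $g_k^i \min\{\alpha g_k^i, x_k^i\}$ (when $g_k^i > 0$ and $i \in \sI$), both nondecreasing in $\alpha$. Hence $\phi(\alpha_k) \ge \phi(\alpha^*)$, and it suffices to lower-bound $\phi(\alpha^*)$.

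I then split into the two disjuncts of the trigger. If $g_k^i < -\epsilon_k^{3/2}$ for some $i \in J_k^+$, that coordinate satisfies $g_k^i < 0$ and $x_k^i \ge 0$, so the projection is inactive at $i$ and it alone contributes $\alpha^*(g_k^i)^2 > \alpha^* \epsilon_k^3$ to $\phi(\alpha^*)$; all other coordinate contributions are nonnegative, giving $\phi(\alpha^*) > \alpha^* \epsilon_k^3$ directly. For the other disjunct $\|S_k^+ g_k^+\| > \epsilon_k^2$, I would exploit $x_k^i \le \epsilon_k$ on $J_k^+$ and split each coordinate's contribution according to whether the projected update truncates to zero: in the untruncated regime ($\alpha^* g_k^i \le x_k^i$), $\alpha^*(g_k^i)^2 \ge \alpha^*(g_k^i x_k^i)^2/(x_k^i)^2 \ge \alpha^*(g_k^i x_k^i)^2/\epsilon_k^2$; in the truncated regime ($\alpha^* g_k^i > x_k^i$), $g_k^i x_k^i \ge (g_k^i x_k^i)^2/(\epsilon_k U_g)$, using $|g_k^i| \le U_g$ from Assumption~\ref{Ass: 2Lipstz}. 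Summing across $J_k^+$ (with contributions from $g_k^i \le 0$ handled analogously) and invoking $\|S_k^+ g_k^+\|^2 > \epsilon_k^4$ yields $\phi(\alpha^*)$ of the claimed order $\alpha^* \epsilon_k^3$.

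The main obstacle is the coordinate-wise bookkeeping in the second disjunct: coordinates whose projected update truncates to zero no longer move by $-\alpha^* g_k^i$, so their contributions must be re-expressed in terms of the scaled quantity $(g_k^i x_k^i)^2$ appearing in $\|S_k^+ g_k^+\|^2$ using only the uniform bounds $x_k^i \le \epsilon_k$ and $|g_k^i| \le U_g$. Once those estimates are in place, the factor $\tfrac{1}{2}$ from the acceptance criterion combined with $\alpha_k \ge \alpha^* = \min\{1, \theta/L_g\}$ yields the decrease $\tfrac{1}{4}\min\{1, \theta/L_g\}\epsilon_k^3$ claimed by the lemma.
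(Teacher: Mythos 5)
Your overall architecture is sound and largely parallels the paper's: the stepsize lower bound $\alpha_k \ge \min\{1,\theta/L_g\}$ via the descent inequality \eqref{ineq: T2} plus the projection inequality is exactly the paper's argument, the monotonicity of $\phi(\alpha)=g_k^T(x_k-x_k(\alpha))$ is a correct (and slightly different) way to transfer the bound to the accepted step, and your treatment of the first trigger ($g_k^i<-\epsilon_k^{3/2}$) is fine. The genuine gap is in the truncated coordinates of the second trigger. There you bound each term by $g_k^i x_k^i \ge (g_k^i x_k^i)^2/(\epsilon_k U_g)$, so after summing and invoking $\|S_k^+ g_k^+\|^2>\epsilon_k^4$ you only obtain
\begin{equation*}
\phi(\alpha^*) \;\ge\; \min\Bigl\{\tfrac{\alpha^*}{\epsilon_k^2},\,\tfrac{1}{\epsilon_k U_g}\Bigr\}\,\epsilon_k^4 \;=\; \min\{\alpha^*\epsilon_k^2,\;\epsilon_k^3/U_g\},
\end{equation*}
which gives a decrease of order $\tfrac12\min\{\alpha^*,1/U_g\}\epsilon_k^3$. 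This is \emph{not} the lemma's bound: whenever $U_g > 2\max\{L_g/\theta,1\}$ (which can easily happen, since $U_g$ scales with the gradient norm over the level set while $L_g$ does not), your constant is strictly smaller than $\tfrac14\min\{\theta/L_g,1\}$, so the statement as written is not established. The quantity $U_g$ must not enter at all.

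The fix is the paper's aggregation trick, which your per-coordinate conversion misses: split $\sum_{i\in J_k^+}(x_k^i g_k^i)^2>\epsilon_k^4$ into the two regimes and note that at least one regime carries mass $\ge\epsilon_k^4/2$. In the untruncated regime use $x_k^i\le\epsilon_k$ as you do, getting contribution $\ge\beta\epsilon_k^2/2$. In the truncated regime the contribution is exactly $\sum x_k^i g_k^i$ with all terms nonnegative, so $\sum_i x_k^i g_k^i \ge \bigl(\sum_i (x_k^i g_k^i)^2\bigr)^{1/2}\ge\epsilon_k^2/\sqrt{2}$ — the $\ell^1$-versus-$\ell^2$ comparison replaces your $U_g$ bound and yields $\phi(\beta)>\min\{\beta/2,1/\sqrt2\}\epsilon_k^3$ for every $\beta>0$ (so you do not even need monotonicity of $\phi$; just take $\beta=\alpha_k$). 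Combined with $\alpha_k\ge\min\{\theta/L_g,1\}$ and the factor $\tfrac12$ from the acceptance test, this gives precisely $\tfrac14\min\{\theta/L_g,1\}\epsilon_k^3$.
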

\begin{proof}
If $g_k^i < -\epsilon_k^{3/2}$ for some $i \in J_k^+$ or $\| S^+_k g^+_k \| > \epsilon_k^2$ at the gradient projection step, then for any steplength $\beta > 0$, at least one of two cases occurs. In the first case of $g_k^i < -\epsilon_k^{3/2}$ for some $i \in J_k^+$, we have
\begin{equation}
\label{ineq: dec.1}
g_k^i < -\epsilon_k^{3/2} \implies ( g_k^i )^2 > \epsilon_k^3 \implies ( x_k^i - ( x_k^i - \beta g_k^i )_+) g_k^i = \beta (g_k^i)^2 > \beta \epsilon_k^3.
\end{equation}
In the second case, we have
\begin{align*}
\| S^+_k g^+_k \|^2 > \epsilon_k^4 & \implies \sum_{i \in J_k^+} (x_k^i)^2 (g_k^i)^2 > \epsilon_k^4 \\
& \implies \sum_{i \in J_k^+, \beta g_k^i \le x_k^i} (x_k^i)^2 (g_k^i)^2 + \sum_{i \in J_k^+, \beta g_k^i > x_k^i} (x_k^i)^2 (g_k^i)^2 > \epsilon_k^4.
\end{align*}
Therefore, either
\begin{align*}
    \sum_{i \in J_k^+, \beta g_k^i \le x_k^i} (x_k^i)^2 (g_k^i)^2 \ge \epsilon_k^4/2 \overset{ (x_k^i \le \epsilon_k, \forall i \in J_k^+) }{\implies}  \sum_{i \in J_k^+, \beta g_k^i \le x_k^i} (g_k^i)^2 \ge \epsilon_k^2/2,
\end{align*}
or
\begin{align*}
    \sum_{i \in J_k^+, \beta g_k^i > x_k^i} (x_k^i)^2 (g_k^i)^2 \ge \epsilon_k^4/2 \implies \sum_{i \in J_k^+, \beta g_k^i > x_k^i} x_k^i g_k^i \ge \epsilon_k^2/\sqrt{2}.
\end{align*}
Thus in this case, we have
\begin{align}
\notag
\sum_{i \in J_k^+}   ( x_k^i - (x_k^i - \beta g_k^i)_+ )g_k^i  
& = \sum_{i \in J_k^+, \beta g_k^i \le x_k^i } \beta (g_k^i)^2 + \sum_{i \in J_k^+, \beta g_k^i > x_k^i }  x_k^i g_k^i \\
\notag
& \ge \min\{ \beta/2, 1/\sqrt{2} \} \epsilon_k^2 \\
\label{ineq: dec.2}
& \overset{(\epsilon_k
< 1)}{>}  \min\{ \beta/2, 1/\sqrt{2} \} \epsilon_k^3.
\end{align}
By noting $g_k^i(x_k^i - (x_k^i - \beta g_k^i)_+
) \ge 0$ for any $i \in \sI$, we have for any $\beta > 0$ that
\begin{align}
\notag
g_k^T ( x_k -  P(x_k-\beta g_k) ) & = \sum_{i \in \sI} g_k^i ( x_k^i - (x_k^i - \beta g_k^i)_+ ) + \sum_{i \in \sI^c} \beta (g_k^i)^2 \\
\notag
& \ge \sum_{i \in J_k^+} g_k^i ( x_k^i - (x_k^i - \beta g_k^i)_+ ) \\
\label{ineq: dec.3}
& \overset{ \eqref{ineq: dec.1}, \eqref{ineq: dec.2} }{>} \min\{ \beta/2,1/\sqrt{2} \} \epsilon_k^3.
\end{align}
Note for any $0 < \beta < \frac{1}{L_g}$, where $L_g$ is the Lipschitz constant of $\nabla f$, we have
\begin{align*}
f( P(x_k-\beta g_k) ) & \le 
f(x_k) - g_k^T ( x_k - P(x_k - \beta g_k) ) + \frac{L_g}{2} \| x_k - P(x_k - \beta g_k) \|^2 \\
& \le f(x_k) - g_k^T ( x_k - P(x_k - \beta g_k) ) + \frac{L_g}{2} \beta g_k^T (x_k - P(x_k - \beta g_k) ) \\
& < \rev{ f(x_k) - g_k^T ( x_k - P(x_k - \beta g_k) ) + \frac{1}{2} g_k^T (x_k - P(x_k - \beta g_k) ) } \\
& = f( x_k ) - \frac{1}{2} g_k^T ( x_k - P(x_k - \beta g_k) ) ,
\end{align*}
where the second inequality holds because $ (u - v)^T(P(u) - P(v)) \ge \| P(u) - P(v) \|^2$ for any $u, v \in \bR^{n}$, and the third inequality holds because $ \beta < 1/L_g$ \rev{and $g_k^T( x_k - P(x_k - \beta g_k) ) > 0$ by \eqref{ineq: dec.3}.} Therefore, by the line search rule, $\tilde m_k < +\infty$ and $\theta^{\tilde{m}_k} \ge \min\left\{ \frac{\theta}{L_g}, 1 \right\}$.
Thus, by the lower bound for $\theta^{\tilde m_k}$, the bound \eqref{ineq: dec.3}, and the backtracking line search mechanism, we have
\begin{align*}
f(x_k) - f(x_{k+1}) & > \frac{1}{2} g_k^T(x_k - P(x_k- \theta^{\tilde m_k} g_k)) > \frac{1}{4} \min\left\{ \theta/ L_g, 1 \right\} \epsilon_k^3.
\end{align*}\qed
\end{proof}

\begin{lemma} \label{lm: ncdec}
  Suppose that at iteration $k$, a Newton-CG step is triggered and that Algorithm~\ref{alg:ccg}  returns $d\_{\rm type} = {\rm NC}$. 
  Then we have
  $m_k < +\infty$ and
 \begin{align*}
 f(x_k) - f(P(x_k+\alpha_k d_k)) > \cnc \epsilon_k^3, 
 \end{align*}
  where $\cnc \triangleq \eta \min \left\{ \frac{ (3 - 6\eta)^2 \theta^2 }{ L_H^2 }, \theta^2 \right\}$.
\end{lemma}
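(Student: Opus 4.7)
The plan is to mimic the negative-curvature step analysis from the unconstrained Newton-CG theory, handling the box constraints via the observation that $d_k$ is supported on $J_k^-$, whose components in $\sI$ satisfy $x_k^i > \epsilon_k$; hence the projection $P$ is inactive along the step for small $\alpha$ and the analysis reduces to the standard cubic Taylor bound.

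First I would record three structural facts about $d_k$: (i) $d_k^+ = 0$, so only coordinates in $J_k^-$ move; (ii) the sign factor in the definition of $d_k^-$ forces $g_k^T d_k = (g_k^-)^T d_k^- \le 0$; (iii) using $t^T H_k^- t < 0$ together with the normalization $\|d_k\| = |t^T H_k^- t|/\|t\|^2$, a direct computation gives $d_k^T H_k d_k = -\|d_k\|^3$. I would also invoke the termination guarantee of Algorithm~\ref{alg:ccg} in the NC case, namely $t^T H_k^- t \le -\epsilon_k \|t\|^2$, which yields the crucial lower bound $\|d_k\| \ge \epsilon_k$.

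Next I analyze the backtracking. For $\alpha$ small enough, the feasibility condition $x_k^i + \alpha d_k^i \ge 0$ holds at every $i \in J_k^- \cap \sI$ (because $x_k^i > \epsilon_k$), so $x_k(\alpha) = x_k + \alpha d_k$. Applying \eqref{ineq: T3} with $v = \alpha d_k$ and using (ii) and (iii) gives
\[
f(x_k + \alpha d_k) - f(x_k) \le -\tfrac{\alpha^2}{2}\|d_k\|^3 + \tfrac{L_H \alpha^3}{6}\|d_k\|^3.
\]
To beat the backtracking target $-\eta\alpha^2\epsilon_k\|d_k\|^2$, after dividing by $\alpha^2\|d_k\|^2$ and using $\|d_k\| \ge \epsilon_k$, it suffices that $\alpha \le (3-6\eta)/L_H$. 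Letting $\bar\alpha := \min\{(3-6\eta)/L_H,\,1\}$, every $\alpha \in (0,\bar\alpha]$ satisfies the acceptance test, so $m_k < +\infty$ and $\alpha_k \ge \theta\bar\alpha$. Therefore
\[
f(x_k) - f(x_k(\alpha_k)) > \eta\alpha_k^2\epsilon_k\|d_k\|^2 \ge \eta(\theta\bar\alpha)^2\epsilon_k\cdot\epsilon_k^2 = \cnc\,\epsilon_k^3.
\]

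The main technical subtlety is reconciling the ``no projection'' window with the stated $\bar\alpha$. In principle $\|d_k\|$ could be as large as $U_H$, so strict feasibility along the trial ray would require $\alpha\|d_k\|_\infty \le \epsilon_k$ and might tighten $\bar\alpha$ further. I expect this is handled either by exploiting the structure of the Capped-CG output together with the threshold $\epsilon_k$ that defines $J_k^\pm$, or alternatively by decomposing $x_k(\alpha) - x_k = \alpha d_k + r$ with $r$ supported only on projected coordinates and verifying that the curvature contribution $-\alpha^2\|d_k\|^3/2$ still dominates once the correction terms involving $r$ are bounded. This projection-feasibility bookkeeping is where the constrained analysis departs from the unconstrained template and is the step that will require the most care.
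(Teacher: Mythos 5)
Your structural facts (i)--(iii), the bound $\|d_k\|\ge\epsilon_k$ from Lemma~\ref{lm: capCG}, and the cubic Taylor bound \eqref{ineq: T3} on the no-projection segment are all exactly the ingredients the paper uses. But there is a genuine gap at the decisive step: you claim that \emph{every} $\alpha\in(0,\bar\alpha]$ with $\bar\alpha=\min\{(3-6\eta)/L_H,1\}$ passes the acceptance test, hence $\alpha_k\ge\theta\bar\alpha$. That claim is unjustified, because your Taylor-based decrease estimate is only valid when $x_k(\alpha)=x_k+\alpha d_k$, which you can only guarantee when $\alpha\|d_k\|\le\epsilon_k$ (so that $\alpha\|d_k\|_\infty\le\epsilon_k < x_k^i$ for $i\in J_k^-\cap\sI$). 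Since $\|d_k\|$ can be as large as $U_H$, the window $(0,\bar\alpha]$ may contain steplengths for which the projection truncates the step and no decrease is guaranteed; backtracking may then continue down to $\alpha\approx\theta\epsilon_k/\|d_k\|$, which can be far smaller than $\theta\bar\alpha$. You flag this yourself in your final paragraph but leave it unresolved, and the specific inequality $\alpha_k\ge\theta\bar\alpha$ that your final estimate rests on is false in general.

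The paper closes exactly this hole, and the fix does not require the correction-term decomposition you sketch as an alternative. One shows sufficient decrease for all $\alpha<\min\bigl\{\tfrac{3-6\eta}{L_H},\tfrac{\epsilon_k}{\|d_k\|}\bigr\}$ (so the projection is provably inactive), whence
\begin{equation*}
\alpha_k \ \ge\ \min\Bigl\{\theta\min\Bigl\{\tfrac{3-6\eta}{L_H},\tfrac{\epsilon_k}{\|d_k\|}\Bigr\},\,1\Bigr\}.
\end{equation*}
The key observation is that the quantity controlling the decrease is the product $\alpha_k\|d_k\|$, not $\alpha_k$ alone: multiplying through by $\|d_k\|$ and using $\|d_k\|\ge\epsilon_k$ gives $\alpha_k\|d_k\|\ge\min\bigl\{\tfrac{(3-6\eta)\theta}{L_H},\theta,1\bigr\}\epsilon_k$, so the potentially tiny factor $\epsilon_k/\|d_k\|$ is exactly compensated and $\alpha_k^2\epsilon_k\|d_k\|^2\ge\min\bigl\{\tfrac{(3-6\eta)^2\theta^2}{L_H^2},\theta^2\bigr\}\epsilon_k^3$, recovering precisely $\cnc\epsilon_k^3$. (Finiteness of $m_k$ is then obtained from $\|d_k\|\le U_H$, which bounds the depth of backtracking.) So your constant is right and your outline is the paper's, but the argument as written proves the bound only under an unestablished lower bound on $\alpha_k$; the missing ingredient is restricting the guaranteed-acceptance window by $\epsilon_k/\|d_k\|$ and then working with $\alpha_k\|d_k\|$.
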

\begin{proof}
For the Newton-CG step, if $ \| \alpha d_k \| \le \epsilon_k $ for some $\alpha>0$,
then $\| \alpha d^-_k \|_\infty = \| \alpha d_k \|_\infty \le
\epsilon_k $ and $P(x_k+\alpha d_k) = x_k + \alpha d_k$. From \eqref{ineq:
  T3}, we have
\begin{align} \nonumber
f( P(x_k+ \alpha d_k) ) & = f(x_k + \alpha d_k) \\
\label{ineq: pp0} 
& \le f(x_k) + \alpha g_k^T d_k + \frac{ \alpha^2 }{ 2 } d_k^T H_k d_k + \frac{L_H}{6} \alpha^3 \| d_k \|^3.
\end{align}
Since $d\_{\rm type} = {\rm NC}$, we have that $(d^-_k)^T g^-_k \le 0$, and from
Lemma~\ref{lm: capCG}(let $\bar d = d_k^-$,$\epsilon = \epsilon_k$) that $ \frac{(d^-_k)^T H^-_k d^-_k }{ \| d^-_k \|^2 } = - \| d^-_k \| \le -\epsilon_k $. Then for any $ 0 < \alpha < \frac{3 - 6\eta}{ L_H }$,
\begin{align}
\notag
& f(x_k) + \alpha g_k^T d_k + \frac{\alpha^2}{2} d_k^T H_k d_k + \frac{L_H}{6} \alpha^3 \| d_k \|^3 \\
\notag
& = f(x_k) + \alpha (g^-_k)^T d^-_k + \frac{\alpha^2}{2} (d^-_k)^T H^-_k d^-_k + \frac{L_H}{6} \alpha^3 \| d^-_k \|^3 \\
\notag
& \le f(x_k) - \frac{\alpha^2}{ 2 } \| d^-_k \|^3 + \frac{L_H}{6} \alpha^3 \| d^-_k \|^3 \\
\label{ineq: pp6}
& < f(x_k) - \eta \alpha^2 \| d^-_k \|^3 \le f(x_k) - \eta \alpha^2 \epsilon_k \| d_k \|^2. 
\end{align}
Then, by leveraging \eqref{ineq: pp0} and \eqref{ineq: pp6}, we have
that if $\alpha < \min\left\{ \frac{3 - 6\eta}{L_H},
\frac{\epsilon_k}{ \| d_k \| } \right\}$, then $f(P(x_k+\alpha d_k)) <
f(x_k) - \eta \alpha^2 \epsilon_k \| d_k \|^2$. Therefore,
backtracking will terminate when $\alpha_k$ drops below $\min\left\{
\frac{ 3 - 6\eta }{L_H}, \frac{\epsilon_k}{ \| d_k \| } \right\}$, if not earlier. 
Further, because of the backtracking mechanism, $\alpha_k$  cannot be less than $\theta$ times this value. 
As a result, we have
\begin{align*}
\alpha_k \ge & \min\left\{ \theta \min\left\{ \frac{3 - 6\eta}{L_H}, \frac{\epsilon_k}{ \| d_k \| } \right\}, 1 \right\} \\
\implies \alpha_k \| d_k \| \ge & \min \left\{ \frac{ (3-6\eta)\theta \| d_k \|}{L_H}, \theta \epsilon_k, \| d_k \| \right\} \\
\overset{ ( \| d_k \| \ge \epsilon_k  ) }{\ge} & \min \left\{ \frac{(3-6\eta)\theta}{L_H}, \theta,1 \right\} \epsilon_k \\
\implies \alpha_k^2 \epsilon_k \| d_k \|^2 \ge & \min \left\{ \frac{(3-6\eta)^2 \theta^2 }{L_H^2}, \theta^2 \right\} \epsilon_k^3.
\end{align*}
Also, $ \| d_k \| = \| d^-_k \| = \frac{ | (d^-_k)^T H^-_k d^-_k | }{ \| d^-_k \|^2 } \le \| H^-_k \|_2 \le \| H_k \|_2 \le L_g $ and
\begin{align*}
& \alpha_k \ge \min\left\{ \theta \min\left\{ \frac{3 - 6 \eta}{L_H}, \frac{\epsilon_k}{ \| d_k \| } \right\}, 1 \right\} \overset{ ( \| d_k \| \le L_g  ) }{\ge} \min\left\{ \frac{ (3-6\eta) \theta }{L_H}, \frac{\theta  \epsilon_k}{ L_g }, 1 \right\} \\
\implies & m_k = \log_\theta \alpha_k \le \max \left\{ \log_\theta \left( \frac{(3-6\eta) \theta}{ L_H } \right), \log_\theta \left( \frac{ \theta \epsilon_k }{ L_g } \right), 0 \right\},
\end{align*}
verifying that $m_k$ is finite and completing the proof.\qed
\end{proof}

\begin{lemma}  \label{lm: soldec}
 Suppose that at iteration $k$, a Newton-CG step is triggered. Moreover, Algorithm~\ref{alg:ccg} returns $d\_{\rm type} = {\rm SOL}$.
  Then $m_k < +\infty$ and
\begin{equation} \label{ineq: sd-sol}
f(x_k) - f(P(x_k+\alpha_k d_k))  > \csol \min\{ \| \nabla f( P(x_k+\alpha_k d_k) ) \mid_{J_k^-} \|^2 \epsilon_k^{-1}, \epsilon_k^3 \},
\end{equation}
where 
\rev{
\[
 \csol \triangleq 
    \eta \min\left\{ \frac{4}{25 + 8 L_H}, \theta^2, \frac{9 (1-\zeta - 2\eta)^2 \theta^2}{L_H^2} , \frac{ (1-\zeta)^2\theta^2}{ (L_H/3 + 2\eta)^2 } \right\}.
\]
}
\end{lemma}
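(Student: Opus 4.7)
The approach is to adapt the Newton-CG analysis of \cite{Royer2019} to the constrained setting, exploiting two geometric facts: since the SOL direction satisfies $d_k^+=0$, only the free components move; and each $i\in J_k^-\cap\sI$ satisfies $x_k^i>\epsilon_k$. Consequently, whenever $\alpha\|d_k\|\le\epsilon_k$ along the backtracking schedule, the projection is inactive, $x_k(\alpha)=x_k+\alpha d_k$, and the cubic Taylor bound \eqref{ineq: T3} applies directly, just as in the NC analysis of Lemma~\ref{lm: ncdec}.

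First I would invoke the SOL guarantees of Algorithm~\ref{alg:ccg} (the SOL analog of what is used in Lemma~\ref{lm: ncdec} via Lemma~\ref{lm: capCG}): the returned direction satisfies the curvature lower bound $(d_k^-)^T H_k^- d_k^-\ge \epsilon_k\|d_k^-\|^2$, the residual bound $\|r_k^-\|\le \tfrac{\zeta}{2}\epsilon_k\|d_k^-\|$ with $r_k^-:=H_k^- d_k^-+g_k^-$, and a norm bound controlling $\|d_k^-\|$ by a constant multiple of $\|g_k^-\|/\epsilon_k$. Substituting $g_k^T d_k = (r_k^-)^T d_k^- - (d_k^-)^T H_k^- d_k^-$ into \eqref{ineq: T3} yields, for $\alpha\|d_k\|\le \epsilon_k$,
\begin{align*}
f(x_k(\alpha))-f(x_k)\le -\alpha\bigl(1-\tfrac{\alpha}{2}\bigr)(d_k^-)^T H_k^- d_k^- + \tfrac{\zeta\alpha}{2}\epsilon_k\|d_k^-\|^2 + \tfrac{L_H}{6}\alpha^3\|d_k^-\|^3,
\end{align*}
and, using the curvature lower bound, the right-hand side falls below $-\eta\alpha^2\epsilon_k\|d_k\|^2$ whenever $\alpha$ is at most a constant multiple of $\min\{(1-\zeta-2\eta)\epsilon_k/(L_H\|d_k^-\|),\,\epsilon_k/\|d_k\|,\,1\}$. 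The backtracking rule then produces an explicit lower bound on $\alpha_k$ and certifies $m_k<+\infty$.

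Next I translate this into a lower bound $f(x_k)-f(x_{k+1})\ge \eta\alpha_k^2\epsilon_k\|d_k\|^2$, split by which term in the above minimum is binding: the short-step case yields decrease proportional to $\epsilon_k^3$, while the long-step case ($\alpha_k$ of order $1$) yields decrease proportional to $\epsilon_k\|d_k^-\|^2$. For the gradient-norm alternative in the statement, I expand, using Lipschitz continuity of $\nabla^2 f$ along the segment $x_k\to x_{k+1}$ and the definition of $r_k^-$,
\[
\nabla f(x_{k+1})\mid_{J_k^-} = (1-\alpha_k)g_k^- + \alpha_k r_k^- + O\bigl(L_H\alpha_k^2\|d_k^-\|^2\bigr).
\]
In the long-step regime the leading term vanishes, and the remaining $O(\epsilon_k\|d_k^-\|)+O(\|d_k^-\|^2)$ contributions square to a quantity dominated by $\epsilon_k^2\|d_k^-\|^2$, so that division by $\epsilon_k$ is controlled by the decrease $\eta\epsilon_k\|d_k^-\|^2$. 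In the short-step regime the decrease is already of order $\epsilon_k^3$, which dominates the stated minimum.

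The main obstacle is the bookkeeping in the short-step regime $\alpha_k<1$: the term $(1-\alpha_k)\|g_k^-\|$ is nonzero and must be absorbed using the fact that the previous trial $\alpha_k/\theta$ failed the sufficient-decrease test, combined with the Capped-CG norm bound relating $\|d_k^-\|$ to $\|g_k^-\|$. Carefully tracking all the Lipschitz, residual, and curvature constants through these cases is what produces the explicit value of $\csol$ stated in the lemma.
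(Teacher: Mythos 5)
Your plan follows essentially the same route as the paper's proof: use the fact that $d_k^+=0$ and $x_k^i>\epsilon_k$ on $J_k^-$ so that $x_k(\alpha)=x_k+\alpha d_k$ whenever $\alpha\|d_k\|\le\epsilon_k$, plug the Capped-CG SOL guarantees into the cubic Taylor bound \eqref{ineq: T3} to get a model-decrease threshold for $\alpha$, bound $m_k$ via backtracking, and then split into a full-step case (where the new gradient on $J_k^-$ is controlled through the residual and $\|d_k^-\|\le\epsilon_k$, yielding the $\|\nabla f(x_k(\alpha_k))\mid_{J_k^-}\|^2\epsilon_k^{-1}$ branch) and backtracked/large-step cases yielding decrease of order $\epsilon_k^3$. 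Two corrections, though. First, the SOL guarantees of Lemma~\ref{lm: capCG} are stated for the damped matrix: $d^T(H_k^-+2\epsilon_k I)d\ge\epsilon_k\|d\|^2$ and $r_k^-=(H_k^-+2\epsilon_k I)d_k^-+g_k^-$; your versions with the raw $H_k^-$ are not what Capped-CG delivers (the raw curvature bound can fail), and the extra $2\epsilon_k d_k^-$ term must be carried through the full-step gradient expansion — it is exactly what produces the $\tfrac{4+\zeta}{2}\epsilon_k\|d_k^-\|$ factor and hence the first constant in $\csol$; once corrected, your computation matches the paper's. Second, the ``main obstacle'' you flag is not one: when $\alpha_k<1$ you never need to bound $\|\nabla f(x_{k+1})\mid_{J_k^-}\|$ or absorb $(1-\alpha_k)\|g_k^-\|$, because the statement only requires the minimum with $\epsilon_k^3$, and the failed-trial/step-length argument already gives decrease $\gtrsim\epsilon_k^3$ in that regime (the paper organizes this via the model index $j_k$ and its Cases 2–4, invoking the new-gradient bound only when $j_k=l_k=0$, i.e.\ the unit step with $\|d_k\|\le\epsilon_k$).
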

\begin{proof}
  Define
\begin{align*}
    & l_k \triangleq \min \left\{ l \in \mathbb{N} \mid \theta^{l} \| d_k \| \le \epsilon_k \right\} \\
    & j_k \triangleq \\
    & \min \left\{ j \ge l_k, j \in \mathbb{N} \mid  \theta^j  g_k^T d_k + \frac{ \theta^{2j} }{2}  d_k^T  H_k  d_k + \frac{L_H \theta^{3j} }{6} \| d_k \|^3 < - \eta \theta^{2j} \epsilon_k \| d_k \|^2 \right\}.
\end{align*}
  Then from \eqref{ineq: pp0} and the definition of $j_k$, we have
  that 
  \[
  f(P(x_k+\theta^{j_k} d_k)) < f(x_k) - \eta \theta^{2 j_k}
  \epsilon_k \| d_k \|^2.
  \]
Therefore, by the definition of $m_k$ in Algorithm~\ref{Alg: PNCG}, it follows that $m_k \le j_k$. 
By Lemma~\ref{lm: capCG}($d = d_k^-$, $g = g_k^-$, $\epsilon=\epsilon_k$), we have
\begin{align*}
\| d_k \| = \| d^-_k \| \le 1.1\epsilon_k^{-1} \| g^-_k \| \le 1.1\epsilon_k^{-1} \| g_k \| \le 1.1 \epsilon_k^{-1} L_f,
\end{align*}
so that
\begin{equation} \label{eq:lk.bd}
l_k \le \left[ \log_\theta \left( \frac{\epsilon_k}{ \| d_k \| } \right) \right]_+ + 1 \le \left[ \log_\theta \left( \frac{\epsilon_k^2}{ 1.1 L_f } \right) \right]_+ + 1.
\end{equation}
According to Lemma~\ref{lm: capCG} (with $d = d_k^-$, $H = H_k^-$, $g = g_k^-$, $\epsilon=\epsilon_k$), we have that
\begin{subequations}
\begin{align}
\label{ineq1: royer19}
(d_k^-)^T ( H_k^- + 2 \epsilon_k I ) d_k^- & \ge \epsilon_k \| d_k^- \|^2,\\
\label{ineq2: royer19}
\| r_k^- \| & \le \frac{1}{2} \epsilon_k \zeta \| d_k^- \|,
\end{align}
\end{subequations}
where $r_k^- \triangleq (H_k^- + 2 \epsilon_k I) d_k^- + g_k^-$. Then,
\begin{align} \label{ineq: CR} 
\notag
& \theta^j (g_k^-)^T d_k^- + \frac{ \theta^{2j} }{ 2 } (d_k^-)^T H_k^- d_k^- + \frac{ L_H \theta^{3j} }{ 6 } \| d_k^- \|^3 \\
\notag
& = - \theta^j ( H_k^- d_k^- + 2 \epsilon_k d_k^- - r_k^-  )^T d_k^- + \frac{ \theta^{2j} }{ 2 } (d_k^-)^T H_k^- d_k^- + \frac{ L_H \theta^{3j} }{ 6 } \| d_k^- \|^3 \\
\notag
& = - \theta^j \left( 1 - \frac{\theta^j}{2} \right) (d_k^-)^T ( H_k^- + 2 \epsilon_k I ) d_k^- - \epsilon_k \theta^{2j} \| d_k^- \|^2 - \theta^j (r_k^-)^T d_k^- \\
\notag
& + \frac{ L_H \theta^{3j} }{ 6 } \| d_k^- \|^3 \\
\notag
&  \overset{ \eqref{ineq1: royer19}}{\le} - \theta^j \left( 1 - \frac{\theta^j}{2} \right)  \epsilon_k \| d_k^- \|^2 + \theta^j \| r_k^- \| \| d_k^- \| + \frac{ L_H \theta^{3j} }{ 6 } \| d_k^- \|^3 \\
\notag
& \overset{ \eqref{ineq2: royer19} }{\le} - \frac{\theta^j}{2}  \epsilon_k \| d_k^- \|^2  + \frac{\theta^j}{2} \epsilon_k \zeta \| d_k^- \|^2 + \frac{ L_H \theta^{3j} }{ 6 } \| d_k^- \|^3 \\
& =  - \frac{ \theta^j }{2} (1-\zeta) \epsilon_k \| d_k^- \|^2 + \frac{L_H \theta^{3j} }{6} \| d_k^- \|^3.
\end{align}
It can be verified that for any $j \ge \left[ \log_\theta \left( \frac{ (1- \zeta) \epsilon_k }{ \eta \epsilon_k + \sqrt{ \eta^2 \epsilon_k^2 + 1.1 L_H(1-\zeta) L_f/3 } } \right) \right]_+ + 1 $, we have
\begin{align*}
& \theta^j < \frac{ (1- \zeta) \epsilon_k }{ \eta \epsilon_k + \sqrt{ \eta^2 \epsilon_k^2 + 1.1 L_H(1-\zeta) L_f/3 } } \\
\overset{ ( \| d^-_k \| \le 1.1 \epsilon_k^{-1} L_f ) }{\implies} & \theta^j < \frac{ (1- \zeta) \epsilon_k }{ \eta \epsilon_k + \sqrt{ \eta^2 \epsilon_k^2 +  L_H(1-\zeta) \epsilon_k \| d^-_k \|/3 } }.
\end{align*}
It then follows from the quadratic formula applied to the  following quadratic inequality\footnote{\revn{if $a > 0$, then $z \ge 0$ and $az^2 + bz+c  < 0$ together are equivalent to $ 0 \le z <  \frac{-2c}{b + \sqrt{b^2-4ac}}$; if $a = 0$ and $b > 0$, then the equivalence still holds trivially.}} in $\theta^j$,
\begin{alignat*}{2}
& \frac{L_H \| d^-_k \| }{6} \theta^{2j} + \eta \epsilon_k \theta^j - \frac{(1-\zeta)\epsilon_k}{2} && < 0 \\
\implies & - \frac{ \theta^j }{2} (1-\zeta) \epsilon_k \| d^-_k \|^2 + \frac{L_H}{6} \theta^{3j} \| d^-_k \|^3 && < - \eta \theta^{2j} \epsilon_k \| d^-_k \|^2 \\
\overset{ \eqref{ineq: CR} }{\implies} & \theta^j (g^-_k)^T d^-_k + \frac{ \theta^{2j} }{2} (d^-_k)^T H^-_k d^-_k + \frac{L_H \theta^{3j}}{6} \| d^-_k \|^3 &&< - \eta \theta^{2j} \epsilon_k \| d^-_k \|^2 \\
\implies & \theta^j  g_k^T d_k + \frac{ \theta^{2j} }{2}  d_k^T H_k d_k + \frac{L_H \theta^{3j}}{6} \| d_k \|^3 &&< - \eta \theta^{2j} \epsilon_k \| d_k \|^2.
\end{alignat*}
\rev{Then by the definitions of $j_k$ and $l_k$ together with \eqref{eq:lk.bd}, we have}
\begin{align*}
    & j_k \le 1 + \\
    & \max\left\{ \left[ \log_\theta \left(
  \frac{\epsilon_k^2}{ 1.1 L_f } \right) \right]_+, \left[ \log_\theta
  \left( \frac{ (1- \zeta) \epsilon_k }{ \eta \epsilon_k + \sqrt{
      \eta^2 \epsilon_k^2 + 1.1 L_H(1-\zeta) L_f/3 } } \right)
  \right]_+ \right\},
\end{align*}
  which is also an upper bound for $m_k$.

Next, we derive the lower bound for $\alpha_k^2 \epsilon_k \| d_k \|^2$ which, when scaled by $\eta$,  is the required amount of decrease in $f$.  
We consider four cases.

\medskip

\noindent{\bf Case 1.} $ j_k = l_k = 0 $. 
In this case we have $m_k = 0$, $\alpha_k = 1$, and $ \| d^-_k \| = \| d_k \| \le \epsilon_k$. 
Therefore, $ x_k^i + d_k^i \ge 0, \forall i \in \sI \cap J_k^- \implies P(x_k+\alpha_k d_k) = x_k + d_k$.  
Then we have
\begin{align*}
    \| \nabla f(P(x_k+\alpha_k d_k) ) \mid_{J_k^-} \| & = \| \nabla f(x_k + d_k) \mid_{J_k^-} \| \\
    & = \| \nabla f(x_k + d_k) \mid_{J_k^-} - g_k^- + g_k^- \|\\
    & = \| \nabla f(x_k + d_k) \mid_{J_k^-} - g_k^- - H_k^- d_k^- - 2 \epsilon_k d_k^- + r_k^- \| \\
    & \le \frac{L_H}{2} \| d_k^- \|^2 + 2\epsilon_k \| d_k^- \| + \| r_k^- \| \\
    & \overset{\eqref{ineq2: royer19}}{\le} \frac{L_H}{2} \| d_k^- \|^2 + \frac{4 + \zeta}{2}\epsilon_k \| d_k^- \| \\
    & \rev{\overset{(\zeta<1)}{\le} \frac{L_H}{2} \| d_k^- \|^2 + \frac{5}{2}\epsilon_k \| d_k^- \|}.
\end{align*}
By applying the \rev{quadratic formula to the inequality above (which involves a quadratic in $\| d^-_k \|$), we obtain
\begin{align*}
\| d^-_k \| & \ge \frac{ - \frac{5}{2} + \sqrt{ \frac{25}{4} + 2 L_H \| \nabla f(P(x_k+\alpha_k d_k) ) \mid_{J_k^-} \|/\epsilon_k^2 } }{L_H} \cdot \epsilon_k \\
& = \frac{ - 5 + \sqrt{ 25 + 8 L_H \min\{ \| \nabla f(P(x_k+\alpha_k d_k) ) \mid_{J_k^-} \|/\epsilon_k^2 , 1\} } }{2L_H} \cdot \epsilon_k  \\
& = \frac{  4 \min\{ \| \nabla f(P(x_k+\alpha_k d_k) ) \mid_{J_k^-} \|/\epsilon_k^2 , 1\} }{5 + \sqrt{ 25 + 8 L_H \min\{ \| \nabla f(P(x_k+\alpha_k d_k) ) \mid_{J_k^-} \|/\epsilon_k^2 , 1\} } } \cdot \epsilon_k \\
& \ge \frac{4}{ 5 + \sqrt{25 + 8 L_H} } \min\{ \| \nabla f(P(x_k+\alpha_k d_k) ) \mid_{J_k^-} \| \epsilon_k^{-1}, \epsilon_k  \} \\
& \ge \frac{2}{\sqrt{25 + 8 L_H} } \min\{ \| \nabla f(P(x_k+\alpha_k d_k) ) \mid_{J_k^-} \| \epsilon_k^{-1}, \epsilon_k  \} \\ 
\notag
& \Downarrow (\alpha_k = 1, \| d_k \| = \| d^-_k \| ) \\
\notag
\alpha_k^2 \epsilon_k \| d_k \|^2 & \ge  \frac{4}{ 25 + 8 L_H} \min\{ \| \nabla f(P(x_k+\alpha_k d_k) ) \mid_{J_k^-}  \|^2 \epsilon_k^{-1}, \epsilon_k^3  \}.
\end{align*}
}

\noindent{\bf Case 2.} $ j_k = l_k \ge 1 $. 
In this case, since $\alpha_k = \theta^{m_k}$ with $m_k \le j_k = l_k$, we have 
\begin{align*}
     \theta^{ l_k } \| d_k \| > \theta \epsilon_k & \implies \alpha_k \|d_k \| = \theta^{m_k} \| d_k \| > \theta \epsilon_k \\
     & \implies  \alpha_k^2 \epsilon_k \| d_k \|^2 = (\alpha_k \| d_k \| )^2 \epsilon_k > \theta^2 \epsilon_k^3.
\end{align*}

\noindent{\bf Case 3.} $ j_k > l_k = 0 $. For $j = 0$ and $j = j_k - 1$, we must have
\begin{align}
\notag
& \theta^j  g_k^T d_k + \frac{ \theta^{2j} }{2}  d_k^T  H_k  d_k + \frac{L_H \theta^{3j} }{6} \| d_k \|^3 \ge - \eta \theta^{2j} \epsilon_k \| d_k \|^2 \\
\notag
\implies & \theta^j  (g^-_k)^T d^-_k + \frac{ \theta^{2j} }{2}  (d^-_k)^T  H^-_k d^-_k + \frac{L_H \theta^{3j} }{6} \| d^-_k \|^3 \ge - \eta \theta^{2j} \epsilon_k \| d^-_k \|^2 \\
\notag
\overset{ \eqref{ineq: CR} }{\implies} & - \frac{ \theta^j }{2} (1-\zeta) \epsilon_k \| d^-_k \|^2 + \frac{L_H}{6} \theta^{3j} \| d^-_k \|^3 \ge - \eta \theta^{2j} \epsilon_k \| d^-_k \|^2 \\
\label{ineq: pp7}
\implies & \frac{L_H}{6} \theta^{2j} + \frac{\eta \epsilon_k }{ \| d^-_k \| } \theta^j - \frac{(1-\zeta) \epsilon_k }{ 2 \| d^-_k \| } \ge 0.
\end{align}
By setting $j = 0$ in this inequality, we have $ \|  d^-_k \| \ge ( 3(1-\zeta) - 6\eta ) \epsilon_k / L_H $. 
By setting $j = j_k- 1$ in this same inequality, and using $\theta^{j_k} > \theta^{2j_k}$, we have
\begin{align}
\notag
\left( \frac{L_H}{6} + \frac{\eta \epsilon_k}{ \| d^-_k \| } \right) \theta^{j_k - 1} & \ge \frac{(1-\zeta) \epsilon_k}{2 \| d^-_k \|} \\
\label{ineq: pp8}
\implies
\theta^{j_k} \| d^-_k \| & \ge \frac{(1-\zeta) \theta \epsilon_k}{(L_H/3)+ 2\eta\epsilon_k/\| d^-_k \|} \\
\notag
& \ge \frac{(1-\zeta) \theta \epsilon_k}{(L_H/3)+ 2\eta L_H/(3(1-\zeta)-6\eta) } \\
\notag
& = \frac{ 3(1-\zeta-2\eta)\theta \epsilon_k }{L_H},
\end{align}
where the final equality follows by elementary manipulation.
Using again $\alpha_k = \theta^{m_k} \ge \theta^{j_k}$, we have
\[
\alpha_k^2 \epsilon_k \| d_k \|^2 = \alpha_k^2 \epsilon_k \| d^-_k \|^2 \ge ( \theta^{j_k} \| d^-_k \| )^2 \epsilon_k \ge \frac{9 (1-\zeta - 2\eta)^2 \theta^2 \epsilon_k^3}{L_H^2}.
\]

\noindent{\bf Case 4.} $ j_k > l_k \ge 1$. 
By the same argument as in Case 3, \eqref{ineq: pp8} holds. 
Moreover, $\| d^-_k \| = \| d_k \| > \epsilon_k$ since $l_k \ge 1$. 
Therefore, we have
\begin{align*}
\eqref{ineq: pp8} & \implies \theta^{j_k} \| d^-_k \| \ge \frac{(1-\zeta) \theta \epsilon_k}{L_H/3+ 2\eta\epsilon_k/\| d^-_k \|} > \frac{(1-\zeta) \theta \epsilon_k}{L_H/3+ 2\eta} \\
& \implies \alpha_k^2 \epsilon_k \| d_k \|^2 \ge ( \theta^{ j_k} \| d_k \|)^2 \epsilon_k \ge \frac{ (1-\zeta)^2\theta^2\epsilon_k^3 }{ (L_H/3 + 2\eta)^2 }
\end{align*}

\medskip

By combining the four cases analyzed above, we obtain
\[
\alpha_k^2 \epsilon_k \|d_k \|^2 \ge \frac{1}{\eta} \csol \min\{ \| \nabla f(P(x_k+\alpha_k d_k) ) \mid_{J_k^-}  \|^2 \epsilon_k^{-1}, \epsilon_k^3  \}.
\]
Therefore,  by the line search rule, \eqref{ineq: sd-sol} holds.\qed
\end{proof}

\begin{lemma} \label{lm: meostep}
Suppose that at iteration $k$ of Algorithm~\ref{Alg: PNCG}, Procedure~\ref{alg:meo} is invoked and identifies a direction with curvature less than or equal to $-\tfrac12 \epsilon_k$. Then we have 
\begin{align*}
f(x_k) - f(x_{k+1}) > \eta \min\left\{ \frac{ (3 - 6\eta)^2 \theta^2}{8 L_H^2} , \frac{\theta^2}{2} ,  \frac{1}{8} \right\} \epsilon_k^3 \ge \min \left\{ \frac{\cnc}{8}, \frac{\eta}{8} \right\} \eps_k^3.
\end{align*}
\end{lemma}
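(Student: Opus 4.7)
The plan is to parallel the argument of Lemma~\ref{lm: ncdec}, tracking the additional projection $P$ and scaling $S_k$ that appear in the step $x_{k+1}=P(x_k+\theta^{\bar m_k}S_kd_k)$. Because the MEO returns a unit vector $d$ with $d^TS_kH_kS_kd\le -\tfrac12\eps_k$, the formula for $d_k$ will yield three identities that drive the whole analysis: $g_k^TS_kd_k\le 0$, $d_k^TS_kH_kS_kd_k=-\|d_k\|^3$, and $\|d_k\|=|d^TS_kH_kS_kd|\ge\eps_k/2$. Moreover, since $\eps_k<1$, the definition \eqref{eq:Sk.def} forces $\|S_k\|\le 1$ and hence $\|S_kd_k\|\le\|d_k\|$, which will be used in the Taylor step.

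The first task is to identify a range of step lengths on which the projection is inactive, i.e.\ $P(x_k+\alpha S_kd_k)=x_k+\alpha S_kd_k$. For $i\in J_k^-\cap\sI$, where $x_k^i>\eps_k$ and $s_k^i=1$, the $i$th coordinate stays nonnegative whenever $\alpha\|d_k\|\le\eps_k$; for $i\in J_k^+$, where $s_k^i=x_k^i$, the $i$th coordinate equals $x_k^i(1+\alpha d_k^i)$ and is therefore nonnegative whenever $\alpha\|d_k\|\le 1$. Because $\eps_k<1$, the single condition $\alpha\le\eps_k/\|d_k\|$ suffices for projection inactivity. On this range I will substitute $x_k(\alpha)-x_k=\alpha S_kd_k$ into the cubic Taylor bound \eqref{ineq: T3} and invoke the three identities together with $\|S_kd_k\|\le\|d_k\|$ to obtain
\[
f(x_k(\alpha))-f(x_k)\le \alpha g_k^TS_kd_k-\tfrac{\alpha^2}{2}\|d_k\|^3+\tfrac{L_H\alpha^3}{6}\|d_k\|^3\le -\tfrac{\alpha^2}{2}\|d_k\|^3+\tfrac{L_H\alpha^3}{6}\|d_k\|^3.
\]
A short rearrangement then shows that the line-search condition $f(x_k(\alpha))<f(x_k)-\eta\alpha^2\|d_k\|^3$ is satisfied as soon as $\alpha<(3-6\eta)/L_H$, so acceptance is guaranteed for every $\alpha$ below $\alpha^*:=\min\{(3-6\eta)/L_H,\;\eps_k/\|d_k\|\}$. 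The backtracking rule then gives $\alpha_k\ge\min\{1,\theta\alpha^*\}$, and in particular $\bar m_k<+\infty$.

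It will then remain to combine the achieved decrease $f(x_k)-f(x_{k+1})>\eta\alpha_k^2\|d_k\|^3$ with $\|d_k\|\ge\eps_k/2$, splitting into the three cases arising from $\min\{1,\theta\alpha^*\}$: (i) $\alpha_k\ge 1$, producing decrease at least $\eta\eps_k^3/8$; (ii) $\alpha_k\ge\theta(3-6\eta)/L_H$, producing $\eta(3-6\eta)^2\theta^2\eps_k^3/(8L_H^2)$; and (iii) $\alpha_k\ge\theta\eps_k/\|d_k\|$, in which case $\alpha_k^2\|d_k\|^3\ge\theta^2\eps_k^2\|d_k\|\ge\theta^2\eps_k^3/2$. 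The worst of the three recovers the first displayed bound of the lemma, and the second inequality (involving $\cnc/8$ and $\eta/8$) follows because $\theta^2/2\ge\theta^2/8$ once one compares with the definition of $\cnc$ from Lemma~\ref{lm: ncdec}. The main obstacle is the projection-inactivity step: verifying that the rescaling $S_k$ is precisely tuned to make any component with small $x_k^i$ harmless under the step $\alpha S_kd_k$, which is what allows a single threshold $\eps_k/\|d_k\|$ (rather than something much tighter) to suffice.
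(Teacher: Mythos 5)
Your proposal is correct and follows essentially the same route as the paper's proof: the same three identities from the MEO output (sign of $g_k^TS_kd_k$, $d_k^TS_kH_kS_kd_k=-\|d_k\|^3$, $\|d_k\|\ge\epsilon_k/2$), the same verification that the projection is inactive for $\alpha\|d_k\|\le\epsilon_k$ (treating $i\in J_k^+$ via the scaling $s_k^i=x_k^i$ and $i\in J_k^-\cap\sI$ via $x_k^i>\epsilon_k$), the same cubic Taylor bound with $\|S_kd_k\|\le\|d_k\|$, and the same backtracking lower bound $\alpha_k\ge\min\{1,\theta\min\{(3-6\eta)/L_H,\epsilon_k/\|d_k\|\}\}$. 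Your three-case split is just an unpacked version of the paper's single $\min$ expression, and your comparison with $\cnc$ for the final inequality is also correct.
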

\begin{proof}
Let scalar $\lambda$ and vector $d$ be the quantities returned by MEO, Procedure~\ref{alg:meo}, so that $d^T S_k H_k S_k d = \lambda \le -\epsilon_k/2$ and $\| d \| = 1$. 
From the subsequent definition of $d_k$ in Algorithm~\ref{Alg: PNCG}, we have that
\begin{subequations}
\label{ineq: misc1}
\begin{align}
  g_k^T S_k d_k & = - | g_k^T S_k d | | d^T S_k H_k S_k d  | \le 0, \\
  \label{ineq: misc1a}
  \| d_k \| & =  | d^T S_k H_k S_k d  | \| d \| = | \lambda | \ge \tfrac12 \epsilon_k, \\
  \label{ineq: misc1c}
d_k S_k H_k S_k d_k & = ( d^T S_k H_k S_k d )^3 = \lambda^3 = -\| d_k \|^3.
\end{align}
\end{subequations}
Then, for any $0 < \gamma < \frac{3 - 6 \eta}{L_H}$, we have
\begin{align*}
\notag
f(x_k + \gamma S_k d_k ) & \le f(x_k) + \gamma g_k^T S_k d_k + \frac{\gamma^2}{2} d_k^T S_k H_k S_k d_k + \frac{L_H}{6} \gamma^3 \| S_k d_k \|^3 \\
& \overset{(S_k[i,i] \le 1)}{\le} f(x_k) + \gamma g_k^T S_k d_k + \frac{\gamma^2}{2} d_k^T S_k H_k S_k d_k + \frac{L_H}{6} \gamma^3 \| d_k \|^3 \\
\notag
& \overset{ \eqref{ineq: misc1} }{\le} f(x_k) - \frac{\gamma^2}{2} \| d_k \|^3 + \frac{L_H}{6} \gamma^3 \| d_k \|^3 \\
& < f(x_k) - \eta \gamma^2 \| d_k \|^3.
\end{align*}
Note that if $\gamma \| d_k \| \le \epsilon_k < 1$ then $\gamma \| d_k \|_\infty \le \epsilon_k < 1$ and $ P(x_k + \gamma S_k d_k) = x_k + \gamma S_k d_k$. 
In fact, by invoking \eqref{eq:Sk.def}, we have
\begin{align*}
    i \in J_k^+ \; & \implies \; x_k^i+\gamma s_k^i  d_k^i \ge x_k^i - x_k^i \| \gamma d_k \|_{\infty} \ge 0, \\
    i \in J_k^- \cap \sI \; & \implies \; x_k^i + \gamma s_k^i d_k^i = x_k^i + \gamma d_k^i \ge x_k^i - \epsilon_k >0.
\end{align*}
Thus for any $\gamma < \min\left\{ \frac{3 - 6\eta}{L_H}, \frac{\epsilon_k}{ \| d_k \| } \right\}$, we have  
\[
f(P(x_k + \gamma S_k d_k )) = f(x_k + \gamma S_k d_k) < f(x_k) - \eta \gamma^2 \| d_k \|^3.
\]
Therefore, because of the backtracking mechanism and the definition of $\bar {m}_k$, we have
\begin{align}
\notag
\theta^{\bar m_k} & \ge \min\left\{ \theta \min\left\{ \frac{3 - 6\eta}{L_H}, \frac{\epsilon_k}{ \| d_k \| } \right\}, 1 \right\} \\
\label{ineq: sslb}
\implies  \theta^{\bar m_k} \| d_k \| & \ge \min \left\{ \frac{(3-6\eta)\theta \| d_k \|}{L_H }, \theta \epsilon_k, \| d_k \| \right\} \\
\notag
& \overset{ ( \| d_k \| = | \lambda | \ge \frac{\epsilon_k}{2}  ) }{\ge} \min \left\{ \frac{(3-6\eta)\theta }{2 L_H }, \theta, \frac{1}{2} \right\} \epsilon_k.
\end{align}
Then, based on the line search rule and the bounds \eqref{ineq:
  sslb} and \eqref{ineq: misc1a}, we have
\begin{align*}
  f(x_k) - f(x_{k+1}) & = f(x_k) - f(P(x_k + \theta^{\bar m_k} S_k d_k)) \\
  & > \eta \theta^{2 \bar m_k} \| d_k \|^3 \\
& \ge \eta \min\left\{ \frac{ (3 - 6\eta)^2 \theta^2 }{8 L_H^2} , \frac{\theta^2}{2} ,  \frac{1}{8} \right\} \epsilon_k^3.
\end{align*}
The final inequality follows from the definition of $\cnc$ in Lemma~\ref{lm: ncdec}.\qed
\end{proof}

We now state and prove the main complexity result for
Algorithm~\ref{Alg: PNCG}. Note that $\epsilon_g$ is the parameter in the condition triggering the Newton-CG step in Algorithm~\ref{Alg: PNCG}.
%
\begin{theorem}\label{thm: PNCG.iter.comp} 
Suppose that Assumptions~\ref{Ass: comp.lev.set} and \ref{Ass: 2Lipstz} hold for the problem \eqref{opt: bc0}, \eqref{def:O}.
Consider Algorithm~\ref{Alg: PNCG} with $\epsilon_k \equiv \epsilon_H < 1$. 
\rev{Then Algorithm~\ref{Alg: PNCG} will stop within}
\begin{align} \label{def: Kpncg}
\Kpncg \triangleq \left\lfloor \frac{ 16 (f(x_0) - \fl) }{ \min\left\{ \cnc, 8 \csol, \frac{2\theta}{L_g},  \eta \right\} } \max \{ \epsilon_g^{-2} \epsilon_H, \epsilon_H^{-3}  \}  \right\rfloor + 2
\end{align}
iterations, and outputs a vector $x \in \Omega$ such that \rev{the following approximate first-order optimality conditions hold}
\begin{subequations} \label{e2o}
\begin{alignat}{2} \label{e2o:1}
& x^i \ge 0 \;\; \mbox{for $i \in \sI$}, \quad && \| S \nabla f(x) \| \le \epsilon_g + \epsilon_H^2, \\
\label{e2o:2}
& \nabla_i f(x)  \ge - \epsilon_H^{3/2}, \, && \forall i \in J^+ \triangleq \{  i \in \sI  \mid 0 \le x^i  \le \epsilon_H \},
\end{alignat}
\end{subequations}
\rev{with probability 1}. 
Moreover, $S \nabla^2 f(x) S \succeq - \epsilon_H I $ with probability at least $(1 - \delta)^{\Kpncg}$, 
where $S = \diag(s)$ is a diagonal matrix with $s^i = x^i, \forall i \in J^+$ and $s^i=1$ otherwise; and  $\delta \in [0,1)$ is the probability of failure in Procedure~\ref{alg:meo}. 
In particular, if we set $\epsilon_g = \epsilon$ and $\epsilon_H = \sqrt{\epsilon}$, then the algorithm outputs an $(\epsilon,1/2)$-2o point (according to Definition~\ref{def:em2o}) with  probability at least $(1-\delta)^{\Kpncg}$ within $\sO(\epsilon^{-3/2})$ iterations.
\end{theorem}
\begin{proof}
We prove by estimating the function decrease when the algorithm does not stop at iteration $k$ or $k+1$. \\
{\noindent \bf Case 1.} A gradient projection step is taken at iteration $k$. Then by Lemma~\ref{lm: gradproj}, we have
\begin{equation} \label{eq:case2}
f(x_k) - f(x_{k+1}) > \frac{1}{4} \min\left\{ \frac{\theta}{L_g}, 1 \right\} \epsilon_k^3.
\end{equation}
{\bf Case 2.} 
The Newton-CG step is triggered at iteration $k$, $J_{k+1}^- \neq \emptyset$ and $\| g^-_{k+1} \| > \epsilon_g$. Note that $\epsilon_k \equiv \epsilon_H$ indicates that $J_{k+1}^- \subseteq J_k^-$. Therefore, we have
\begin{align*}
\| \nabla f(x_{k+1}) \mid_{J_k^-} \| \ge \| g^-_{k+1} \| > \epsilon_g.
\end{align*}
Thus, by Lemma~\ref{lm: ncdec} and Lemma~\ref{lm: soldec}, we have that 
\begin{align*}
f(x_k) - f(x_{k+1})
& \ge \min\{ \cnc, \csol \} \min \{ \| \nabla f(x_{k+1}) \mid_{J_k^-} \|^2 \epsilon_H^{-1}, \epsilon_H^3  \} \\ 
& > \min\{ \cnc, \csol \} \min\{ \epsilon_g^2 \epsilon_H^{-1}, \epsilon_H^3 \}.
\end{align*}
{\noindent \bf Case 3.} The MEO procedure is triggered and a negative curvature step is taken at iteration $k$. Lemma~\ref{lm: meostep} then implies that
\begin{equation} \label{eq:case3}
f(x_k) - f(x_{k+1}) > \min\left\{ \frac{\cnc}{8},   \frac{\eta}{8} \right\} \epsilon_k^3.
\end{equation}
{\noindent \bf Case 4.} The Newton-CG step is triggered at iteration $k$, but $J_{k+1}^- = \emptyset$ or $\| g^-_{k+1} \| \le \epsilon_g$. We have from Lemmas~\ref{lm: ncdec} and \ref{lm: soldec} that $f(x_k) > f(x_{k+1})$. Moreover,  since the algorithm does not stop at iteration $k+1$,  $x_{k+2}$ is calculated from a step that is analyzed in either Case 1 or Case 3. It follows that either \eqref{eq:case2} or \eqref{eq:case3} is satisfied with $k$ replaced by $k+1$.

We now combine the lower bounds for function value decrease derived in the above four cases, let $\epsilon_k \equiv \epsilon_H < 1$, and we have that for any $k \ge 0$ such that the algorithm does not stop at iteration $k$ and $k+1$, that
\[
f(x_k) - f(x_{k+2}) > 
\min\left\{ \csol, \frac{\cnc}{8}, \frac{\theta}{4L_g}, \frac{\eta}{8} \right\} \min \{ \epsilon_g^2 \epsilon_H^{-1} ,\epsilon_H^3  \}
\]
if the stopping criterion is not satisfied. Therefore, the algorithm must stop within
the number of iterations stated in the theorem. When the algorithm stops, the output $x_k$ satisfies:
\begin{align}\label{outputcond.1}
\| g^-_k \| \le \epsilon_g, \quad g_k^i \ge -\epsilon_H^{3/2}, \ \forall i \in J_k^+,  \quad \| S^+_k g^+_k \| \le \epsilon_H^2.
\end{align} 

Now let us derive the probability that the output $x_k$ does not satisfies $S_k H_k S_k \succeq -\epsilon_H I$. Denote by $p_{k, F}$ the probability that the algorithm does not stop before iteration $k-1$ and $x_k$ does not satisfy $\lambda_{\min} (S_k \nabla^2 f(x_k) S_k) \ge -\epsilon_H$. 
(We set $p_{0,F} \triangleq 1$.) 
Denote by $p_{k,F,stop}$ the probability that the algorithm stops at iteration $k$ but $x_k$ does not satisfy $\lambda_{\min} (S_k \nabla^2 f(x_k) S_k) \ge -\epsilon_H$. 
Therefore, since the failure probability of Procedure~\ref{alg:meo} is $\delta$, we have that
\begin{align*}
p_{k,F,stop} \le \delta p_{k, F}.
\end{align*}
We know that the algorithm must stop within $\Kpncg$ number of iterations. Therefore, if we denote the probability of failure of PNCG as $p_F$, then
\begin{align*}
    p_F = \sum_{k = 0}^{\Kpncg - 1} p_{k,F,stop}.
\end{align*}
We have that for any $k = 0,1, \dotsc, \Kpncg-1$ that
\begin{align*}
    p_{k,F} + \sum_{t=0}^{k-1}p_{t,F,stop} \le 1,
\end{align*}
so that
\begin{align*}
p_{k,F,stop} \le \delta\left( 1 - \sum_{t=0}^{k-1} p_{t,F,stop} \right), \quad k = 0,1,\dotsc,\Kpncg-1.
\end{align*}
Next we show that $ \sum_{t=0}^k p_{t,F,stop} \le 1 - (1-\delta)^{k+1}$, $k = 0,1,\dotsc,\Kpncg-1 $ by induction. 
The claim is trivial for $k = 0$. 
Supposing that it holds when $k = \bar k \in \{0,1,\dotsc,\Kpncg-2 \}$, we have
\begin{align*}
    \sum_{t=0}^{\bar k + 1} p_{t,F,stop} & = \sum_{t=0}^{\bar k} p_{t,F,stop} + p_{\bar k + 1,F,stop} \\
    & \le \sum_{t=0}^{\bar k} p_{t,F,stop} + \delta\left( 1 - \sum_{t=0}^{\bar k} p_{t,F,stop} \right) \\
    & = \delta + (1-\delta) \sum_{t=0}^{\bar k} p_{t,F,stop} \\
    & \le \delta + (1-\delta) [ 1 - (1-\delta)^{\bar k + 1} ] \\
    & = 1 - (1-\delta)^{\bar k + 2}.
\end{align*}
This proves that the desired bound holds for $k = \bar k + 1$, completing the induction. 
Therefore, we have that
\begin{align*}
    p_F = \sum_{k=0}^{\Kpncg-1} p_{k,F,stop} \le 1 - (1-\delta)^{\Kpncg}.
\end{align*}
Then we proved that with probability at least $(1-\delta)^{\Kpncg}$, the output $x_k$ satisfies $S_k H_k S_k \succeq - \epsilon_H I $.
This condition for $x_k$ combined with \eqref{outputcond.1} indicate the output property.\qed
\end{proof}


\rev{In the statement of Theorem~\ref{thm: PNCG.iter.comp}, $\delta$ is a user-defined parameter. It can be chosen small enough to ensure that $(1-\delta)^{\Kpncg}$ is large. Specifically, by Bernoulli's inequality, for $\delta \in [0,1)$ and $K \ge 1$,
$$ (1-\delta)^K \ge 1 - K \delta. $$
If, for example, we set $\delta = 0.01/\Kpncg$, then $(1-\delta)^{\Kpncg} \ge 1 - 0.01 = 0.99$. Note that the value of $\delta$ only affects the operation complexity (involving Hessian-vectors products), which depends only {\em logarithmically}  on $\delta$ (see Corollary~\ref{cor:oper.comp} below).
Therefore, we are free to choose very small values of $\delta$ without affecting the operation complexity significantly.}

We now state a result for operation complexity of this approach, based on the fundamental operations of gradient evaluation and Hessian-vector products.
\begin{corollary}\label{cor:oper.comp}
\rev{Suppose that Assumptions~\ref{Ass: comp.lev.set}, \ref{Ass: 2Lipstz} hold for the problem \eqref{opt: bc0}, \eqref{def:O}.
For some $\epsilon \in (0,1)$, consider Algorithm~\ref{Alg: PNCG} with $\epsilon_k \equiv \sqrt{\epsilon}$ and $\epsilon_g = \epsilon$. 
Then Algorithm~\ref{Alg: PNCG} stops and outputs an $(\epsilon,1/2)$-2o point with probability at least $(1-\delta)^{\Kpncg}$ ($\Kpncg$ defined in \eqref{def: Kpncg}) within
\begin{align*}
    O\left(\epsilon^{-3/2}  \min \left\{ n, \epsilon^{-1/4}  \log\left( \frac{n}{\delta \epsilon} \right)  \right\} \right).
\end{align*}
fundamental operations (gradient evaluations or Hessian-vector products).}
\end{corollary}
\rev{
\begin{proof}
The bound on Hessian-vector products before Algorithm~\ref{Alg: PNCG} stops is:
\begin{align}\label{hvbd}
    \sum_{k=0}^{\Kpncg - 1}(\max\{ 2 \min \{ n, \mathbb{J}_k \} + 1, \Nmeo_k \}),
\end{align}
where $2 \min \{n, \mathbb{J}_k \} + 1$ and $\Nmeo_k$ are the bound on Hessian-vector products of the Capped CG and MEO procedure, respectively, at iteration 
$k$. By Lemma~\ref{lm:CGoper.comp.} and \ref{lm:MEOoper.comp.} in Appendix~\ref{app: capped-CG} and \ref{app: MEO}, given $\kappa \triangleq \frac{\| H_k^- \| + \epsilon_k}{\epsilon_k} \le \frac{ L_g + \epsilon_k}{\epsilon_k}$, $\Cmeo_k = \log \left( \frac{2.75n}{\delta^2}\right) \frac{\sqrt{ \| H \| }}{2} \le \log \left( 2.75n/\delta^2 \right) \sqrt{ L_g }/2$ and $\epsilon_k \equiv \sqrt{\epsilon}$, we have that:
\begin{align*}
    \mathbb{J}_k & \le \min\left\{n, \left\lceil \left( \sqrt{\kappa} + \frac{1}{2} \right) \log \left( \frac{144 (\sqrt{\kappa} + 1)^2 \kappa^6 }{\zeta^2} \right) \right\rceil \right\} \\
    \implies \mathbb{J}_k & = \sO\left(\min\left\{n,\epsilon^{-\frac{1}{4}}\log\left(\epsilon^{-1} \right)\right\}\right) \\
    \Nmeo_k & = \min \left\{ n, 1 + \lceil \Cmeo_k \epsilon_k^{-\frac{1}{2}} \rceil \right\} = \sO\left( \min\left\{n,\epsilon^{-\frac{1}{4}}\log\left(n/\delta \right) \right\} \right),
\end{align*}
Therefore, by Theorem~\ref{thm: PNCG.iter.comp} we have that
\begin{align*}
    \eqref{hvbd} & \le 
    \sum_{k=0}^{\Kpncg - 1} 2 (\max\{ \mathbb{J}_k, \Nmeo_k \} + 1) \\
    & = \sO \left(\Kpncg \min \left\{ n, \epsilon^{-\frac{1}{4}} \max \left\{ \log\left(\epsilon^{-1} \right), \log\left(n/\delta \right) \right\} \right\} \right) \\
    & = \sO \left(\Kpncg \min \left\{ n, \epsilon^{-\frac{1}{4}} \left( \log\left(\epsilon^{-1} \right) + \log\left(n/\delta \right)  \right) \right\} \right) \\
    & = \sO \left(\epsilon^{-\frac{3}{2}}  \min \left\{ n, \epsilon^{-\frac{1}{4}}  \log\left( \frac{n}{\delta \epsilon} \right)  \right\} \right)
\end{align*}
Then the result follows by noticing that the number of gradient evaluation is bounded by the number of outer-loop iterations of Algorithm~\ref{Alg: PNCG}, i.e., $\Kpncg$. \qed
\end{proof}}


\section{Numerical experiment}\label{sec: num}
We test the practicality of {\bf PNCG} (Algorithm~\ref{Alg: PNCG}) by comparing it with several other approaches on the well-known Nonnegative Matrix Factorization (NMF) problem.
The competitors include the gradient projection method ({\bf pgrad}) described in \cite[Section 3.3]{bertsekas2016nonlinear} (see Algorithm~\ref{Alg: pgrad}),
a log-barrier Newton-CG ({\bf LBNCG}) proposed in \cite{10.1093/imanum/drz074} for  optimization with bounds, and two approaches that are specialized to NMF.
Preliminary results show that {\bf PNCG} contends well with {\bf pgrad} and {\bf LBNCG}, and is competitive with the specialized methods on problems with relatively low dimensions.\footnote{Experiments in this section are conducted using \texttt{Matlab R2018b} on MacBook Air 1.3 GHz Intel Core i5. Source codes of experiments in this section can be found at: \texttt{https://github.com/yue-xie/ProjectedNewton}.} 
We use $\left<A,B\right>$ to denote the  inner product of matrices $A, B \in \bR^{d_1 \times d_2}$ defined by  $Tr(A^TB)$, while the Frobenius norm  is $\| A \|_F = \sqrt{\left<A,A\right>}$.

NMF is stated as follows, for a given matrix $V \in \bR^{m \times n}$:
\begin{equation} \label{NMF}
\min_{W \in \bR^{m \times r}, \Y \in\bR^{r \times n}} \, F(W,\Y) \triangleq \frac{1}{2} \| W \Y - V \|_F^2, \quad \mbox{subject to  $W \ge 0$, $ \Y \ge 0$},
 \end{equation}
 where the  nonnegativity constraints apply componentwise, that is, all elements of $W$ and $\Y$ are required to be nonnegative.
NMF has a wide range of applications in image processing and text mining; see  \cite{gillis2014and} for a comprehensive review.

In all following experiments, we create synthetic datasets following the approach in \cite{kim2008toward}: Matrices $\bar{W} \in \bR^{m \times r}$ and $\bar{\Y} \in \bR^{r \times n}$ are generated randomly where each element has half standard normal distribution (to ensure $\bar{W} \ge 0$ and $\bar{\Y} \ge 0$). 
Then approximately $60\%$ of the elements of these matrices (chosen uniformly at random) are replaced by zeros. 
We then set  $V = \bar{W} \bar{\Y} + E$,  where $E$ is elementwise Gaussian  with  mean $0$ and standard deviation of $5\%$ of average elementwise magnitude of $\bar{W} \bar{\Y}$. Finally, $V$ is normalized such that its average elementwise magnitude is $1$.

\subsection{Comparison with other solvers with complexity guarantees}
\label{subsec: exp1} 
In this subsection we solve NMF using {\bf PNCG} and other solvers, including the gradient projection method ({\bf pgrad}) and the log-barrier Newton-CG ({\bf LBNCG}). The former is a known practical method for constrained nonlinear optimization \cite[Section 3.3]{bertsekas2016nonlinear}. However, it is only guaranteed to seek an approximate first-order optimal point; its complexity guarantees ($\sO(\epsilon^{-2})$) (c.f. \cite{ghadimi2016mini}) are generally worse than second-order methods ($\sO(\epsilon^{-3/2})$) in the nonconvex regime. The latter is proposed in \cite{10.1093/imanum/drz074}, which does have competitive complexity guarantees (see Table~\ref{tab: complex.}). Although {\bf PNCG} and {\bf LBNCG} are able to locate approximate second-order optimal solutions, we stop these algorithms as long as a first-order point is found or time/iteration limit is reached, so that comparison with {\bf pgrad} is fair.

\paragraph{Methods.} 
First we specify the methods implemented in the experiment and their settings. 
We make use here of notation $\nabla^P$ introduced in \cite{lin2007projected} and  defined as follows:
\begin{align} \label{def: projgrad}
  \begin{aligned}
  \nabla^P_i f(x) = \begin{cases}
    \nabla_i f(x) & \ \mbox{if $x^i > 0$ or $i \in \sI^c$},\\
    \min\{0, \nabla_i f(x) \} & \ \mbox{if $x^i = 0$ and $i \in \sI$}.
    \end{cases}
\end{aligned}
\end{align}
(Note that $\nabla^P f(x) = 0$ implies the first-order optimality conditions of \eqref{1o-stat}.)

\be
\item[1.] {\bf PNCG} (Algorithm~\ref{Alg: PNCG})\footnote{Note that Assumption~\ref{Ass: comp.lev.set} may not hold for \eqref{NMF}, but we can modify the formulation to ensure this property, for example by adding elementwise upper bounds to $W$ and $H$ or adding a penalty $\| W^TW - \Y \Y^T \|^2$ to the objective. We omit these modifications to allow a more direct comparison with the specialized solvers for \eqref{NMF} described later.}: Set $\epsilon_g = 10^{-6}$, $\epsilon_k \equiv \sqrt{\epsilon_g}$, $\theta = \zeta = 1/2$, $\eta = 0.2$. For the parameter $\hat \zeta$ in Algorithm~\ref{alg:ccg}, we set it initially $.1$, but decrease by a factor of $10$ whenever the line search procedure in the outer-loop fails to find a descent direction, until a lower bound of $\frac{\zeta}{3 \kappa}$ is reached. 
We do not use Procedure MEO, terminating Algorithm~\ref{Alg: PNCG} when $g_k^i \ge -\epsilon_k^{3/2}$ for all $i \in J_k^+$ and $\| S^+_k g^+_k \| \le \epsilon_k^2$ and $ \| g^-_k \| \le \epsilon_g $, because we are interested only in finding an approximate first-order solution satisfying \eqref{e2o}.
\item[2.] {\bf pgrad} (Algorithm~\ref{Alg: pgrad}): Projected gradient method \cite[Section 3.3]{bertsekas2016nonlinear} directly applied to NMF. This method uses Armijo rule along the projection arc \cite{bertsekas2016nonlinear}, with backtracking parameter $\beta = 1/2$ and step acceptance parameter $\sigma = 1/2$ is chosen as such to be consistent with the gradient projection step in Algorithm~\ref{Alg: PNCG} (where the step acceptance parameter is set as the default value $1/2$).
This algorithm is terminated when $\| \nabla^P F(W,\Y) \|_F \le 10^{-4}$. 
\begin{algorithm}[ht!]
\caption{Projected gradient method for NMF ({\bf pgrad})}\label{Alg: pgrad}
\begin{algorithmic}
\STATE {\bf (Initialization)} Choose initial nonnegative real matrices $W_0, \Y_0$, backtracking parameter $\beta \in (0,1)$ and step acceptance parameter $\sigma \in (0,1)$. 
\FOR{$k=0,1,2,\dotsc$}
   \STATE Let $m_k$ be the smallest nonnegative integer $m$ such that
  \begin{align*}
  & F( W_k, \Y_k ) - F( W_k( \beta^m ), \Y_k(\beta^m) ) \\
  & > \sigma ( \left\langle W_k-W_k(\beta^m), \nabla_W F(W_k,\Y_k) \right\rangle +\left\langle \Y_k-\Y_k(\beta^m), \nabla_Y F(W_k,\Y_k) \right\rangle ),
  \end{align*}
  where $ W_k(\alpha) \triangleq \max(W_k - \alpha \nabla_W F(W_k,\Y_k),0)$ \\
  \;\; and $\Y_k(\alpha) \triangleq \max(\Y_k - \alpha \nabla_Y F(W_k,\Y_k),0)$.
\STATE Let $ W_{k+1} := W_k (\beta^{m_k} ) $; $ \Y_{k+1} := \Y_k (\beta^{m_k} ) $.
\ENDFOR
\end{algorithmic}
\end{algorithm}
\item[3.] {\bf LBNCG}: Log-barrier Newton-conjugate-gradient \cite{10.1093/imanum/drz074}. 
This method is \\equipped with worst case complexity guarantees (see Table~\ref{tab: complex.}) but its practical performance has not been studied to date. We implement it as Algorithm~1 in \cite{10.1093/imanum/drz074} with parameter choices $\epsilon_g = 10^{-4}$, $\theta = 1/2$, $\xi_r = 1/2$, $\bar \xi = 1/2$, $\beta = 1/2$, $\eta = 1/2$. We deal with the CG accuracy tolerance $\hat \xi_r$ and $c_{\mu}$ 
similarly as in our implementation of {\bf PNCG}, setting  them initially to $.1$ and decreasing them when we find that the modified CG is not yielding descent directions. Similar as in PNCG, we turn off Procedure 3 (MEO) in Algorithm 1 in \cite{10.1093/imanum/drz074} because we are only interested in locating an approximate first-order solution. Termination criterion is $\nabla F(W_k,\Y_k) > -10^{-4}$ and $ | \min\{[W_k^T,Y_k],1\}\odot[\nabla_W F(W_k,Y_k)^T, \nabla_Y F(W_k,Y_k)] | \le 10^{-4} $, where $>, \le, \min\{\}, |\cdot|$ hold elementwisely and $\odot$ denotes elementwise multiplication.
\ee

An outer-loop iteration limit 
of 5000 and a running time of 100s are set for {\bf PNCG} and {\bf pgrad}. 
An outer-loop iteration limit of 10000 and a time limit of 60s are applied to   {\bf LBNCG}. 

\paragraph{Experiment settings and metrics.} To create Table~\ref{tab: comparison on NMF}, we generate three different scenarios ($(m,n,r) \in \{ (150,100,15) , (300,200,15), (600,400,15) \}$). The elements of the initial matrices $W_0$ and $\Y_0$ are chosen from the half standard normal distribution, then normalized so that the average elementwise magnitude of either $W_0$ or $Y_0$ is $1$. 
Given $\bar x \ge 0$, the residual of \eqref{opt: bc0},\eqref{def:O} is defined following Definition~\ref{def:em2o}: 
\begin{align}\label{def: res.}
\mbox{residual} = \max\left\{ \| \bar S \nabla f(\bar x) \|, -\min_{i \in J^+} \left\{ \nabla_i f(\bar x) \right\} \right\}, \end{align}
where $J^+ \triangleq \{ i \in \sI \mid 0 \le \bar{x}^i \le \sqrt{\epsilon_r} \}$, $J^- \triangleq \{1,\hdots,n\} \setminus J^+ =  \sI^c \cup \{ i \in \sI \mid \bar{x}^i > \sqrt{\epsilon_r} \}$, and $\bar S = \diag(\bar{s})$ is a diagonal matrix with $\bar{s}^i= 1$ when $i \in J^-$ and $\bar{s}^i =\bar{x}^i$ when $i \in J^+$. 
In this experiment we let $\epsilon_r = 10^{-6}$. 

\paragraph{Results.} Table~\ref{tab: comparison on NMF} indicates that {\bf PNCG} and {\bf pgrad} are close in performance, with {\bf PNCG} attaining slightly better residual measures. 
{\bf PNCG} requires fewer outer-loop iterations because the Newton-CG steps taken on some iterations yield more progress than a first-order step. 
{\bf LBNCG} is not competitive, perhaps not surprisingly since this method was designed with good {\bf worst-case} complexity in mind, rather than for any practical considerations.

\begin{table}
  \caption{Comparison of three solvers with complexity guarantees on NMF. Three scenarios are considered with different dimensions $m$ and $n$. In each scenario, 5 trials are run from different initial points (each of the four algorithms starts from the same initial point on each trial) and  average results are reported. Elapsed time of each algorithm is reported. $F^*$ is the objective function value of the output. \textbf{residual} is defined in \eqref{def: res.} and \textbf{projnorm} represents $\| \nabla^P F(W,\Y) \|_F$. {\bf PNCG} and {\bf pgrad} have similar performance that clearly dominates {\bf LBNCG}, which always fails to converge in the allotted time / iteration limit.
  }
  \label{tab: comparison on NMF}
  \centering
\begin{tabular}{cccccc}
    \toprule
  Algorithm & outer-loop iteration &  time(s) & $F^*$ & residual & projnorm  \\
  \midrule
  \multicolumn{6}{c}{$m = 150$, $n = 100$, $r = 15$} \\
  \midrule
  {\bf PNCG} & 1030.4 & 1.3 & 15.8 & 2.7e-05 & 6.3e-05 \\ 
{\bf pgrad} & 1275.2 & 1.4 & 15.8 & 9.4e-05 & 9.5e-05 \\ 
{\bf LBNCG} & 9774.8 & 57.7 & 4574.0 & 2.6e+04 & 4.4e+04 \\
\midrule 
\multicolumn{6}{c}{$m = 300$, $n = 200$, $r = 15$}\\
\midrule 
{\bf PNCG} & 639.4 & 2.3 & 68.9 & 2.8e-05 & 1.4e-04 \\ 
{\bf pgrad} & 708.8 & 2.2 & 68.9 & 9.3e-05 & 9.5e-05 \\ 
{\bf LBNCG} & 4529.4 & 60.0 & 23702.8 & 7.5e+04 & 1.4e+05 \\ 
\midrule
\multicolumn{6}{c}{$m = 600$, $n = 400$, $r = 15$}\\
\midrule 
{\bf PNCG} & 579.2 & 8.1 & 285.5 & 3.0e-05 & 1.1e-04 \\ 
{\bf pgrad} & 619.4 & 8.8 & 285.5 & 9.3e-05 & 9.7e-05 \\ 
{\bf LBNCG} & 1364.8 & 60.0 & 146213.1 & 2.1e+05 & 4.3e+05 \\ 
\bottomrule
  \end{tabular}
\end{table}

\subsection{Comparison with specialized NMF schemes}\label{subsec: exp2}

We now compare {\bf PNCG} with efficient alternating-direction schemes that are specialized for NMF.
The following methods are compared.
\be
\item[1.] {\bf PNCG}(Algorithm~\ref{Alg: PNCG}): We use the same settings as in in Section~\ref{subsec: exp1}, except that the MEO Procedure (Procedure \eqref{alg:meo}) is turned on and implemented using CG (see \cite[Theorem~1]{Royer2019}) with $\delta = .01$. This procedure enables PNCG to escape from a saddle point, as is shown in Figure~\ref{fig: NMF}(d). 
\item[2.] {\bf alspgrad}: Alternating nonnegative least squares  using projected gradient, described in \cite{lin2007projected}. Parameter settings are as described in \cite{lin2007projected}, except that the algorithm is stopped when $\| \nabla^P F(W,\Y) \|_F \le 10^{-4}$ (instead of $\| \nabla^P F(W,\Y) \|_F$ $\le 10^{-4} \times \| \nabla F(W_0,\Y_0) \|_F$ ) and the initial tolerance for the subproblem is set as $10^{-3}$ (instead of $10^{-3} \times \| \nabla F(W_0,\Y_0) \|_F$). 
\item[3.] {\bf pnm}: Alternating nonnegative least squares using two-metric gradient projection, described in \cite{gong2012efficient}. Parameter settings from \cite{gong2012efficient} are used, except that the algorithm is stopped when $\| \nabla^P F(W,\Y) \|_F \le 10^{-4}$ and the initial tolerance for the subproblem is set as $10^{-3}$.
\ee
An outer-loop iteration limit of 5000 and a running time limit of 100s are set for {\bf PNCG}, while limits of  1000 and 100s are applied to the other two algorithms.

\paragraph{Settings.} \revn{Synthetic datasets are created as above, with $m = 300$ and $n = 200$, and $r= 10,15$. 
For Table~\ref{tab: comparison with specialized solvers}, we use 10 cases of randomly generated datasets and initial matrices for each triple $(m,n,r)$ and record the average outcome. In particular, the initial matrices $W_0,\Y_0$ are generated i.i.d. elementwise from a half standard normal distribution, then normalized such that the average magnitude of either $W_0$ or $\Y_0$ is $1$. 
For Figure~\ref{fig: NMF}, we start both algorithms near a saddle point of \eqref{NMF}, constructed according to the following observation. 
If $U \in \bR^{m \times r_0}$ and $R \in \bR^{r_0 \times n}$ constitute a first-order optimal point of \eqref{NMF} (that is, $\nabla^P F(U,R) = 0$) when $r = r_0$, then 
\begin{align}\label{def: saddlept.}
W \triangleq \frac{1}{k_1} \underbrace{(1,\hdots,1)}_{k_1 \times k_2} \otimes U, \quad \Y \triangleq \frac{1}{k_2} \underbrace{(1;\hdots;1)}_{k_1 \times k_2} \otimes R
\end{align}
constitute a first-order optimal point of \eqref{NMF} when $r = r_0 k_1 k_2$. 
In the experiment, we first use {\bf alspgrad} to solve \eqref{NMF} with $r = r_0$ and obtain the approximate solution $U$ and $R$. We then set $W_0$ and $\Y_0$ as in \eqref{def: saddlept.}, and run {\bf alspgrad} with $r=r_0k_1k_2$ from this starting point, to see if it is able to escape from the saddle point. 
The other approaches are run from the same choice of $W_0$ and $\Y_0$. In Figure~\ref{fig: NMF}, we record three cases: $(r,r_0,k_1,k_2) = (10,1,5,2)$,
$(10,2,5,1)$,$(15,5,3,1)$.}

\paragraph{Results.} \revn{Table~\ref{tab: comparison with specialized solvers} averages results over ten runs for each choice of $(m,n,r)$. We see that {\bf PNCG} is slower in computation time (though within a factor of two of the fastest  specialized solver) but locates a slightly more accurate solution, as measured by the Frobenius norm of the projected gradient. Note that {\bf alspgrad} and {\bf pnm} are methods designed exclusively to solve NMF; they are not equipped with complexity results.}
When $m$ and $n$ are larger than the values used here, the discrepancy of computation time may become larger. In fact, the cost of the Hessian-vector product or gradient evaluation or checking step acceptance criterion in {\bf PNCG} is $O(mnr)$; the cost of the gradient evaluation or step acceptance criterion validation in the subproblem of {\bf alspgrad} is either $\sO(mr^2)$ or $\sO(nr^2)$; the cost of gradient evaluation or partial Hessian evaluation or step acceptance criterion validation in {\bf pnm} is either $\sO(mr^2)$ or $\sO(nr^2)$, while the step direction calculation in {\bf pnm} costs either $\sO(\bar m \bar s^3)$ or $\sO(\bar n \bar s^3)$ where $\bar m \le m$, $\bar n \le n$, $\bar s \le r$. Therefore, when $m \gg r$, $n \gg r$, the higher costs of these basic operations compromise the performance of PNCG. 

In Figure~\ref{fig: NMF}, where the algorithms are initialized near the saddle point, {\bf pnm} cannot be applied since the Hessian for the subproblem is singular at the initial point. The first-order method {\bf alspgrad} is able to escape from the vicinity of the current saddle point and reduce the objective further, but it appears to get stuck at another suboptimal point. Meanwhile, {\bf PNCG} appears to exit the saddle point, due to a call to the  MEO procedure, Algorithm~\ref{alg:meo}. We include Figure~\ref{fig: NMF} to verify the theory for {\bf PNCG} in the worst-case scenario of starting at a saddle point. Random starts like those used in the other plots are likely to yield convergence of the specialized methods to local minima.



\begin{table}
  \caption{Comparison between {\bf PNCG} and two solvers that are specialized to NMF, showing time, objective function value $F^*$ and \textbf{projnorm} $\| \nabla^P F(W,\Y) \|_F$ of the output. For each group of $(m,n,r)$, we generate synthetic data and initial points randomly. Then repeat for 10 times and report the average values of run time, final objective value, and norm of projected gradient.}
  \label{tab: comparison with specialized solvers}
  \centering
\begin{tabular}{cccc}
    \toprule
  Algorithm &  time(s) & $F^*$ & projnorm  \\
  \midrule
  \multicolumn{4}{c}{$m = 300$, $n = 200$, $r = 10$} \\
  \midrule
  {\bf PNCG} & 1.06 & 70.065 & 3.1e-05 \\ 
{\bf alspgrad} & 0.59 & 70.065 & 5.6e-05 \\ 
{\bf pnm} & 0.79 & 70.065 & 7.8e-05 \\
\midrule
  \multicolumn{4}{c}{$m = 300$, $n = 200$, $r = 15$} \\
  \midrule
  {\bf PNCG} & 2.37 & 68.692 & 6.9e-05 \\ 
{\bf alspgrad} & 1.70 & 68.692 & 8.1e-05 \\ 
{\bf pnm} & 1.19 & 68.692 & 8.0e-05 \\
\bottomrule
  \end{tabular}
\end{table}

\begin{figure}
    \centering
    \includegraphics[width = 1\textwidth]{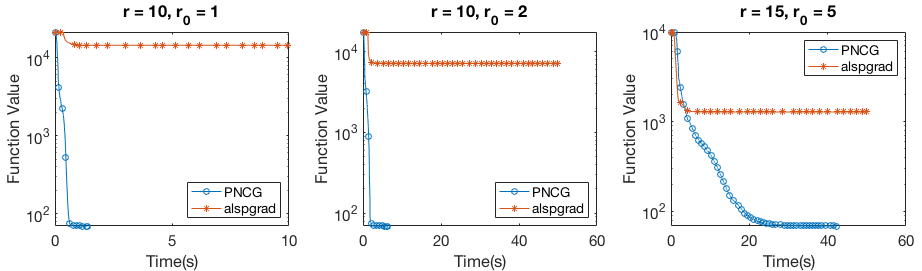}
    \caption{Comparison between {\bf PNCG} and specialized solver {\bf alspgrad}, showing the objective function value plotted against time. The algorithms are started near saddle points constructed as in \eqref{def: saddlept.}.
    }
    \label{fig: NMF}
\end{figure}

\section{Conclusion}\label{sec: con}
In this article, we relate and compare different definitions of approximate second-order optimal point in literature and define our own for optimization with bounds. We proposed a projected Newton-CG method. It has good complexity guarantees and is designed with practicality in mind, and is related to the two-metric projection algorithms proposed in the 1980s. The projected Newton-CG terminates within $\sO(\epsilon^{-3/2})$  iterations or $\tilde{\sO}(\epsilon^{-7/4})$ number of Hessian-vector product/gradient evaluation operations and finds a point that is approximately second-order optimal to tolerance $\epsilon$, with high probability. 
Numerical experiments on nonnegative matrix factorization illustrate practicality of the methods. 

\revn{In future work, we will consider extensions of the algorithms to solve optimization problems with more complex structures such as polyhedral feasible sets, $\ell_1$-norm terms, and even more general convex constraints that allow for cheap projection.} We will also investigate complexity guarantees of the two-metric projection algorithm proposed by Bertsekas.


\bibliographystyle{spmpsci}      
\bibliography{ref}

\appendix

\section{\rev{Comparing approximate second-order optimality conditions}}\label{app:2o}

We show that the relation $\gtrsim$ is transitive.
\begin{lemma}\label{def:trans}
If $\sodAA \gtrsim \sodBB$ and $\sodBB \gtrsim \sodCC$, then $\sodAA \gtrsim \sodCC$.
\end{lemma}
\begin{proof}
Since  $\sodAA \gtrsim \sodBB$, there exists $\epsilon_A \in (0,1]$ and $c_A > 0$ such that for any $x \in \sX$ and $\epsilon \in (0,\epsilon_A]$, if $x$ is an $(\epsilon,p)$-2o point by $\sodAA$, then $x$ is also a $(c_A \epsilon,p)$-2o point by $\sodBB$. 

Likewise, since $\sodBB \gtrsim \sodCC$, there exists $\epsilon_B \in (0,1]$ and $c_B > 0$ such that for any $x \in \sX$ and $\epsilon \in (0,\epsilon_B]$, if $x$ is an $(\epsilon,p)$-2o point by $\sodBB$, then $x$ is also a $(c_B \epsilon,p)$-2o point by $\sodCC$.

Let $\bar \epsilon = \min\{ \epsilon_A, \epsilon_B/c_A \}$. Take arbitrary $x \in \sX$ and $\epsilon \in (0,\bar \epsilon]$. Suppose that $x$ is an $(\epsilon,p)$-2o point by $\sodAA$. Since $\sodAA \gtrsim \sodBB$ and $\epsilon \le \epsilon_A$, $x$ is a $(c_A \epsilon,p)$-2o point by $\sodBB$; Since $\sodBB \gtrsim \sodCC$ and $c_A \epsilon \le \epsilon_B$, $x$ is also a $(c_B c_A \epsilon,p)$-2o point by $\sodCC$. Since the choice of $x \in \sX$ and $\epsilon \in (0,\bar \epsilon]$ is arbitrary, we have that $\sodAA \gtrsim \sodCC$. \qed
\end{proof}

We now give the proof of Theorem~\ref{thm: defcomp}. 
\begin{proof}
Let $U_{\sX} \triangleq \max_{x \in \sX} \| x \|_\infty$.
\be
\item[(1)] Suppose that $x$ is an $(\epsilon,p)$-2o point by $\sodb$ and $\epsilon \le  1$. We show (1) through the following steps.
\be
\item[(1a)] Fix any index $i$. Choose $d$ such that $d^i = \Delta$ and $d^j = 0, \forall j \neq i$, then $ \eqref{eq:em2o-cartis} \implies \nabla f(x)^T d = \nabla_i f(x) \Delta \ge - \Delta \epsilon \implies \nabla_i f(x) \ge -\epsilon $. This indicates that $\nabla f(x) \ge -\epsilon {\bf 1}.$
\item[(1b)] Fix any index $i$. Let $d^i = -\mbox{sign}(\nabla_i f(x)) \min\{\Delta,x^i\} $ and $d^j = 0$, $\forall j \neq i$. Then $ \eqref{eq:em2o-cartis} \implies \nabla f(x)^T d = - | \nabla_i f(x) | \min\{\Delta,x^i\} \ge - \Delta \epsilon \implies | \nabla_i f(x) | \le \frac{ \Delta \epsilon }{ \min\{\Delta,x^i\} } $ $ \implies |\nabla_i f(x) x^i | \le \frac{ \Delta x^i \epsilon}{\min\{\Delta,x^i\}} = \max\{ x^i, \Delta \} \epsilon \le \max\{ U_{\sX}, \Delta \} \epsilon.$ This indicates that
$\| X \nabla f(x) \|_\infty$ $\le \max\{ U_{\sX}, \Delta \} \epsilon \le \max\{ U_{\sX}, \Delta_{\max} \} \epsilon.$
\item[(1c)] If $x = 0$, then second row of \eqref{eq:em2o-Haeser} holds trivially. Suppose that $x \neq 0$. Let $d \triangleq c_d X v$, where $c_d \triangleq \min \left\{ \frac{\Delta}{\| x \|_\infty}, 1 \right\}$, $v$ is an arbitrary vector with $\| v \|_2 = 1$. Therefore, we have that $x + d \ge 0, x - d \ge 0, \| d \| \le \Delta$. Therefore,
\begin{align*}
    -\Delta^2 \epsilon^p & \overset{\eqref{eq:em2o-cartis}}{\le} \nabla f(x)^T d + \frac{1}{2} d^T \nabla^2 f(x) d
\end{align*}
and also,
\begin{align*}
    -\Delta^2 \epsilon^p & \overset{\eqref{eq:em2o-cartis}}{\le} -\nabla f(x)^T d + \frac{1}{2} d^T \nabla^2 f(x) d.
\end{align*}
Therefore,
\begin{align*}
    -\Delta^2 \epsilon^p & \le \frac{1}{2} d^T \nabla^2 f(x) d = \frac{c_d^2}{2} v^T X \nabla^2 f(x) X v  \\
    \implies v^T X \nabla^2 f(x) X v & \ge - \frac{2 \Delta^2}{c_d^2} \epsilon^p \ge - 2 \max\{ \| x \|_\infty^2, \Delta^2 \} \epsilon^p \ge - 2 \max\{ U_{\sX}^2, \Delta_{\max}^2 \} \epsilon^p.
\end{align*}
\ee
Denote $c_{\Delta} \triangleq \left( 2 \max\{ U_{\sX}^2, \Delta_{\max}^2 \} \right)^{1/p}$. Therefore, by (1a)-(1c), $x$ is an $(c\epsilon,p)$-2o point by $\sodc$, where $c \triangleq \max\{1, U_{\sX}, \Delta_{\max}, c_{\Delta} \}$.
\item[(2)] Given $x \ge 0$, let $T \triangleq \diag(t)$ be a diagonal matrix of $n\times n$ such that $t^i = 1$ if $x^i \le 1$ and $t^i = 1/x^i$ if $x^i > 1$. Then we have that $\bar X = XT = TX$.\\
$\sodc \gtrsim \sodd$. Suppose that $x$ is an $(\epsilon,p)$-2o point by $\sodc$. Note
\begin{align*}
    \| \bar X \nabla f(x) \|_\infty \le \| X \nabla f(x) \|_\infty \le \epsilon. \\
    d^T \bar X \nabla^2 f(x) \bar X d = (Td)^T X \nabla^2 f(x) X Td \ge -\epsilon^p \| Td \|^2 \ge -\epsilon^p \| d \|^2, \forall d \in \bR^n.
\end{align*}
Therefore, $x$ is also an $(\epsilon,p)$-2o point by $\sodd$.\\
$\sodd \gtrsim \sodc$. $x$ is an $(\epsilon,p)$-2o point by $\sodd$. Then
\begin{align*}
    \| X \nabla f(x) \|_\infty = \| T^{-1} \bar X  \nabla f(x) \|_\infty \le \| x\|_\infty \| \bar X \nabla f(x) \|_\infty \le U_{\sX} \epsilon. \\
    d^T X \nabla^2 f(x) X d = (T^{-1}d)^T \bar X \nabla^2 f(x) \bar X (T^{-1} d) \ge -\epsilon^p \| T^{-1}d \|^2 \\
    \ge -\epsilon^p \| x \|^2_\infty \| d \|^2 \ge -(U_{\sX}^{2/p} \epsilon)^p \| d \|^2.
\end{align*}
Then $x$ is also an $(\max\{U_{\sX}, U_{\sX}^{2/p}\}\epsilon,p)$-2o point by $\sodc$.
\item[(3)] Obviously we have that $\sodc \gtrsim \sode$. By (2) and the property of $\gtrsim$ and $\thickapprox$, $\sodd \gtrsim \sode$.
\item[(4)] Suppose that $x$ is an $(\epsilon,p)$-2o point by $\soda$. 
Let $J^+$, $J^-$, and $S$ be associated with $x$ as in $\soda$. 
Let $T = \diag(t)$ be a diagonal matrix of dimension $n \times n$ with  $t^i = 1$ for $i \in J^+$ and $t^i = x^i$, for $i \in J^-$. 
Then $X = T S$ and
\begin{align*}
 \| X \nabla f(x) \|_\infty \le \| X \nabla f(x) \| = \| T S \nabla f(x) \| \le \| T \| \| S \nabla f(x) \| \\
 \le \max\{ \| x \|_\infty, 1 \} \| S \nabla f(x) \|
 \le 2 \max\{ U_{\sX}, 1 \} \epsilon \le \max\{ U_{\sX}^{2/p},  2 U_{\sX} , 2 \} \epsilon.
\end{align*}
Also, for any $d \in \bR^n$, we have
\begin{align*}
d^T X \nabla^2 f(x) X d = d^T T S \nabla^2 f(x) S T d \ge - \epsilon^p \| T d \|^2 \\
\ge  - \epsilon^p \max\{ \| x \|_\infty^2  , 1 \}  \| d \|^2 = - (\epsilon \max\{ \| x \|_\infty^{2/p}  , 1 \}  )^p \| d \|^2 \\
\ge - ( \epsilon \max\{ U_{\sX}^{2/p}  , 1 \}  )^p \| d \|^2 \ge - (\epsilon \max\{ U_{\sX}^{2/p},  2 U_{\sX} , 2 \}  )^p \| d \|^2.
\end{align*}
If we let $c = \max\{ U_{\sX}^{2/p},  2 U_{\sX} , 2 \} $, then $x$ is an $(c\epsilon,p)$-2o point by $\sode$.
\ee
\qed
\end{proof}
The following example illustrates why {\sodb,\sodc,\sodd} are not essentially stronger than \soda.
\begin{example}\label{eg1}
Consider problem \eqref{opt: bc0},\eqref{def:O},\eqref{def:sI} in 1-dimension. Let $f(x) = \frac{1}{4} x^4$ and $p \in (0,1]$. Given any $c > 0$,  there exists $\bar \epsilon \in (0,1)$ such that for any $\epsilon \in (0, \bar \epsilon)$, we can find an $x \ge 0$ that is an $(\epsilon,p)$-2o point by \sodb, \sodc, \sodd, but not a $(c\epsilon,p)$-2o point by \soda. In particular, choose $\bar \epsilon$ such that for any $\epsilon \in (0, \bar \epsilon)$,
\begin{align}\label{ineq: eg.}
    \epsilon^{7/24} \le \Delta_{\max},\;  \epsilon^{1/6} \le \Delta_{\max}, \;
    \epsilon^{1/6} \le 6 \Delta_{\max}^2, \;
    \epsilon^{5/24} < c^{-1/2}, \;
    \epsilon^{1/8} < \frac{1}{2c}.
\end{align}
Let $x = \epsilon^{7/24}$. Then $f'(x) = x^3 = \epsilon^{7/8}, xf'(x) = x^4 = \epsilon^{7/6}, f''(x) = 3x^2 \ge 0, \forall x \ge 0$. Apparently, $x$ is an $(\epsilon,p)$-2o point by \sodc, \sodd. Note that by \eqref{ineq: eg.},
\begin{align*}
    & \min_{x + d \ge 0, |d| \le \Delta_{\max}} f'(x) d = -x^3 \cdot x = - \epsilon^{7/6} \ge - \Delta_{\max} \epsilon. \\
    & \min_{x + d \ge 0, |d| \le \Delta_{\max}} f'(x) d + \frac{1}{2} f''(x)d^2 = \min_{x + d \ge 0, |d| \le \Delta_{\max}} x^3 d + \frac{3x^2}{2} \cdot d^2 \\
    & \overset{ (d^* = -x/3)}{=}  -x^4/6 = -\epsilon^{7/6}/6 \ge - \Delta_{\max}^2 \epsilon^p. 
\end{align*}
Therefore, $x$ is also an $(\epsilon,p)$-2o point by \sodb (let $\Delta = \Delta_{\max}$). However, note that by \eqref{ineq: eg.},
\begin{align*}
    x > \sqrt{c\epsilon} \implies J^- = \{1\}; \; 
    f'(x) = \epsilon^{7/8} > 2 c \epsilon,
\end{align*}
so $x$ is not an $(c\epsilon,p)$-2o point by \soda.
\end{example}

A definition of approximate second-order optimality proposed in \cite{nouiehed2020trust}  is adapted to the scope of our study by setting $\Omega = \bR^n_+$, to obtain the following definition.
\begin{definition}[\cite{nouiehed2020trust}, $\sodf$] \label{def: nouiehed2020trust}
     $x$ is an $(\epsilon,p)$-2o point of \eqref{opt: bc0}, \eqref{def:O}, \eqref{def:sI} according to $\sodf$ if $x \in \Omega$, and
\begin{align}\label{eq: nouiehed2020trust}
\begin{aligned}
        \left| \mbox{globalmin}_{x + d \in \Omega, \| d \| \le  1} \quad  \nabla f(x)^T d  \right| &\le \epsilon, \\
    \left| \mbox{globalmin}_{x + d \in \Omega, \| d \| \le 1, \nabla f(x)^T d \le 0 } \quad d^T \nabla^2 f(x) d \right| &\le \epsilon^p.
\end{aligned}
\end{align}
\end{definition}
The next result states that $\sodb$ is essentially stronger than $\sodf$, which in turn is essentially stronger than $\sodc$.
\begin{theorem}\label{thm: nouiehed2020trust}
    (1). $\sodb \gtrsim \sodf$; (2). $\sodf \gtrsim \sodc$.
\end{theorem}
\begin{proof}
    \bi
    \item[(1)] Suppose that $x$ is an $(\epsilon,p)$-2o point by $\sodb$. Let $d_1^*$ and $d_2^*$ be the solution of the first and second problems in \eqref{eq: nouiehed2020trust}, respectively. WLOG, suppose that $\| d_1^* \| \neq 0$, $\| d_2^* \| \neq 0$. Define $ \bar {d_i} =  d_i^* \min\{ \Delta/\| d_i^* \|, 1 \}$, $i = 1,2$. Then $\bar d_1$ and $\bar d_2$ are feasible points in \eqref{eq:em2o-cartis}. Then we have
    \begin{align*}
        \nabla f(x)^T \bar d_1 & \ge -\Delta \epsilon, \\
        \implies  \nabla f(x)^T d_1^* & = \nabla f(x)^T \bar d_1 \max\{ \| d_1^*\|/\Delta,1\} \\
        & \ge -\Delta \max\{ \| d_1^*\|/\Delta,1\} \epsilon = - \max\{ \| d_1^* \|, \Delta \} \epsilon \ge - \max\{\Delta_{\max},1\} \epsilon.
    \end{align*}
Moreover, we have
\begin{align*}
        \frac{1}{2}\bar d_2^T \nabla^2 f(x) \bar d_2 + \nabla f(x)^T \bar d_2 \ge - \Delta^2 \epsilon^p,
    \end{align*}
    which implies that
    \begin{align*}
        \frac{1}{2}\bar d_2^T \nabla^2 f(x) \bar d_2 \ge - \Delta^2 \epsilon^p,
    \end{align*}
    since $\nabla f(x)^T \bar d_2 \le 0 $. Therefore, we have 
    \begin{align*}
        & \frac{1}{2} (d_2^*)^T \nabla^2 f(x) d_2^* = \frac{1}{2}\bar d_2^T \nabla^2 f(x) \bar d_2 (\max\{ \| d_2^* \|/\Delta,1 \})^2 \ge - \Delta^2 (\max\{ \| d_2^* \|/\Delta,1 \})^2 \epsilon^p \\
        & = - \max\{ \| d_2^* \|^2, \Delta^2 \} \epsilon^p \ge -\max\{ \Delta_{\max}^2, 1 \} \epsilon^p.
    \end{align*}
    Altogether, these expressions imply that $x$ is an $(c\epsilon,p)$-2o point by $\sodf$ where \\
    $c = \max\{ 2^{1/p}\Delta_{\max}^{2/p}, \Delta_{\max},2^{1/p} \}$.
    \item[(2)] Let $U_{\sX} \triangleq \max_{x \in \sX} \| x \|_\infty$. Suppose that $x$ is an $(\epsilon,p)$-2o point by $\sodf$. Similar to the proof of Theorem~\ref{thm: defcomp}, first row of \eqref{eq:em2o-Haeser} holds because $\nabla f(x) \ge -\epsilon {\bf 1}$, and $ \| X \nabla f(x) \|_\infty \le \max\{ U_{\mathcal{X}} , 1 \} \epsilon$. 
    Consider now the second row of \eqref{eq:em2o-Haeser}. If $x = 0$, this condition holds trivially. 
    When $x \neq 0$, let $d \triangleq c_d X v$, where $c_d \triangleq \min \left\{ \frac{1}{\| x \|_\infty}, 1 \right\}$, $v$ is an arbitrary vector with $\| v \|_2 = 1$. Let $\bar d = -{\rm sign}(\nabla f(x)^T d) d$. Therefore, we have that $x + \bar d \ge 0, \| \bar d \| \le 1$, $\nabla f(x)^T \bar d \le 0$. According to $\sodf$, we have
\begin{align*}
    \bar d^T \nabla^2 f(x) \bar d \ge -\epsilon^p
\end{align*}
Therefore,
\begin{align*}
    -\epsilon^p & \le \bar d^T \nabla^2 f(x) \bar d = d^T \nabla^2 f(x) d = c_d^2 v^T X \nabla^2 f(x) X v  \\
    \implies v^T X \nabla^2 f(x) X v & \ge - \epsilon^p/c_d^2 \ge - \max\{ \| x \|_\infty^2, 1 \} \epsilon^p \ge - \max\{ U_{\sX}^2, 1 \} \epsilon^p.
\end{align*}
Altogether, $x$ is an $(c\epsilon,p)$-2o point by $\sodc$, where
    $c = \max\{ U_{\mathcal{X}}^{2/p}, U_{\mathcal{X}},1 \}$.
    \ei
\end{proof}

\section{Capped conjugate gradient algorithm}\label{app: capped-CG}

The version of the conjugate gradient method shown in Algorithm~\ref{alg:ccg} was described in \cite[Algorithm~1]{Royer2019} to solve a system of the form $\bH y = -g$, where $\bH = H + 2\eps I$ is a damped version of the symmetric matrix $H$, which in our case is a principal submatrix of the  Hessian $\nabla^2 f(x_k)$. Note that the following results hold regarding Algorithm~\ref{alg:ccg}, as is shown in \cite[Lemma~3]{Royer2019} and \cite[Lemma~1]{Royer2019}.

\begin{lemma} \label{lm: capCG}
Consider the inputs $H,g,\epsilon$ and outputs d\_type and $d$ of Algorithm~\ref{alg:ccg}, if\\
1. d\_type=SOL, then
\begin{align*}
    d^T(H+2\epsilon I)d \ge \epsilon \| d \|^2,\quad 
    \| d \| \le 1.1 \epsilon^{-1} \| g \|,\quad 
    \| r \| \le \frac{1}{2}\epsilon \zeta \|d \|,
\end{align*}
where $r \triangleq (H + 2\epsilon I)d + g$.\\
2. d\_type = NC and  $\bar d \triangleq - \sgn (d^T g) \frac{d^T H d}{\|d \|^2} \frac{d}{\|d \|} $, then $\bar d^T g \le 0$ and
\begin{align*}
    \frac{\bar d^T H \bar d}{\| \bar d \|^2} = - \| \bar d \| \le -\epsilon.
\end{align*}

\end{lemma}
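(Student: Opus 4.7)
The plan is to analyze the outputs of Algorithm~\ref{alg:ccg} case by case on the flag $d\_{\rm type}$, following the approach of \cite[Lemma~3]{Royer2019}. The capped CG procedure applies standard CG iterations to the damped linear system $(H+2\eps I)y = -g$, while monitoring three conditions at each inner iteration: a small/negative curvature test on the current search direction, a norm cap on the iterate $y_j$, and a residual-reduction test. Each possible termination flag corresponds to exactly one of these conditions firing (or to CG simply finishing).

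For part~1 ($d\_{\rm type}={\rm SOL}$), I would proceed in three steps. First, the curvature bound $d^T(H+2\eps I)d \ge \eps \|d\|^2$ follows because the SOL exit implies that no search direction $p_j$ generated along the way produced a Rayleigh quotient on $H+2\eps I$ below $\eps$; by the CG conjugacy relations the final iterate $d$ is a sum of $p_j$'s that are mutually $(H+2\eps I)$-orthogonal, so the same lower bound on the Rayleigh quotient carries over to $d$. Second, the norm bound $\|d\| \le 1.1\,\eps^{-1}\|g\|$ is exactly the ``cap'' monitored in the algorithm --- had it been violated, the procedure would have returned a different flag --- once one accounts for the one-step overshoot between when the cap is checked and when the update is made. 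Third, the residual bound $\|r\| \le \tfrac12\eps\zeta\|d\|$ is the SOL termination criterion itself, and so it holds by definition whenever this flag is returned.

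For part~2 ($d\_{\rm type}={\rm NC}$), the argument is algebraic and follows from the construction of $\bar d$. The sign in front of $d^TH d / \|d\|^2$ is chosen precisely so that $\bar d^T g \le 0$, which is immediate from the definition. Substituting $\bar d = -\sgn(d^Tg)(d^THd/\|d\|^2)(d/\|d\|)$ into the Rayleigh quotient and simplifying gives $\bar d^T H \bar d / \|\bar d\|^2 = -\|\bar d\|$. Finally, the NC flag is raised only when the algorithm finds a direction along which the Rayleigh quotient of $H+2\eps I$ is at most $\eps$ (equivalently, Rayleigh quotient of $H$ is at most $-\eps$), and a short calculation using the definition of $\bar d$ shows that the magnitude of $d^T H d/\|d\|^2$ for such a triggering direction is at least $\eps$, yielding $-\|\bar d\| \le -\eps$.

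The main obstacle will be carefully justifying the constant $1.1$ in the norm bound for the SOL case. A naive bound from the termination test alone only gives $\|d\| \le \eps^{-1}\|g\|$ on the last \emph{pre-update} iterate; obtaining the bound on the post-update iterate requires tracking the CG step size explicitly. This is done by combining the uniform curvature lower bound (from the first step above) with the CG recurrence $\alpha_j = \|r_j\|^2 / (p_j^T(H+2\eps I)p_j)$ and a bound on $\|p_j\|$ in terms of the residuals and Krylov orthogonality; these are routine but bookkeeping-heavy manipulations. Everything else is essentially an unpacking of the stopping conditions built into the algorithm.
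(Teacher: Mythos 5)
The paper does not prove this lemma at all: it is imported verbatim from \cite[Lemma~3]{Royer2019} (together with the elementary rescaling properties of $\bar d$ used there), so your attempt must stand on its own, and in the SOL case it does not. First, your justification of the curvature bound $d^T(H+2\epsilon I)d \ge \epsilon\|d\|^2$ is invalid: the directions $p_i$ are conjugate with respect to $H+2\epsilon I$, not Euclidean-orthogonal, so $\|y_j\|^2$ is not $\sum_i \alpha_i^2\|p_i\|^2$ and a Rayleigh-quotient lower bound on each $p_i$ does not transfer to their sum $y_j$ --- this is exactly why Algorithm~\ref{alg:ccg} tests $y_j^\top \bar H y_j < \epsilon\|y_j\|^2$ as a separate negative-curvature exit. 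The correct (and much simpler) argument is that this test is evaluated \emph{before} the SOL exit, so when d\_type=SOL is returned the bound holds by the ordering of the checks. Second, there is no ``cap'' on $\|y_j\|$ monitored anywhere in the algorithm (``capped'' refers to the residual-growth test $\|r_j\| > \sqrt{T}\tau^{j/2}\|r_0\|$ and the induced iteration bound), so the bound $\|d\|\le 1.1\,\epsilon^{-1}\|g\|$ cannot be read off a termination test plus a ``one-step overshoot'' correction; it is a derived estimate and is the substantive part of the lemma in \cite{Royer2019}, proved by showing that while all curvature tests pass the iterates agree with CG applied to a positive definite restriction and by bounding the iterate norms there. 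Your fallback plan for it also leans on the uniform curvature bound that you did not validly establish. Third, the residual bound is not ``the SOL termination criterion itself'': the test is $\|r_j\|\le\hat{\zeta}\|r_0\|=\frac{\zeta}{3\kappa}\|g\|$, and converting it to $\|r\|\le\frac12\epsilon\zeta\|d\|$ needs the extra step $\|g\|\le\|r_j\|+\|(H+2\epsilon I)d\|\le\hat{\zeta}\|g\|+\epsilon\kappa\|d\|$, hence $\|g\|\le\frac{\epsilon\kappa}{1-\hat{\zeta}}\|d\|$, before the constants combine.

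Your NC case is essentially right: every NC exit returns a vector whose Rayleigh quotient for $H+2\epsilon I$ is below $\epsilon$, hence below $-\epsilon$ for $H$, and the rescaling gives $\|\bar d\| = |d^\top H d|/\|d\|^2 \ge \epsilon$ with the Rayleigh quotient unchanged. Two caveats: for the exit triggered by residual growth, the existence of an index $i$ satisfying \eqref{eq:weakcurvdir} is itself a nontrivial result of \cite{Royer2019} (needed for well-definedness, which you, like the paper, take on faith); and the sign claim $\bar d^\top g \le 0$ really requires the absolute value $|d^\top H d|$ as in Algorithm~\ref{Alg: PNCG} --- with $d^\top H d<0$, the formula as literally written in the lemma statement flips that sign, a slip in the statement rather than in your reasoning, but worth flagging rather than calling the inequality ``immediate.''
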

\rev{
\begin{lemma}\label{lm:CGoper.comp.}
Number of matrix-vector multiplication of Algorithm~\ref{alg:ccg} is bounded by
\begin{align*}
    2 \min\{n, \mathbb{J}(M,\epsilon,\zeta) \}+1,
\end{align*}
where
\begin{align*}
    \mathbb{J}(M,\epsilon,\zeta) \le \min\left\{n, \left\lceil \left( \sqrt{\kappa} + \frac{1}{2} \right) \log \left( \frac{144 (\sqrt{\kappa} + 1)^2 \kappa^6 }{\zeta^2} \right) \right\rceil \right\}.
\end{align*}
\end{lemma}}

\begin{algorithm}[ht!]
\caption{Capped Conjugate Gradient}
\label{alg:ccg}
\begin{algorithmic}
\STATE \emph{Inputs:} Symmetric matrix $H \in \bR^{n \times n}$; 
vector $g \ne 0$; damping parameter $\epsilon \in (0,1)$; desired relative 
accuracy $\zeta \in (0,1)$;
\STATE \emph{Optional input:} scalar $M$ (set to $0$ if not provided);
\STATE \emph{Outputs:} d\_type, $d$;
\STATE \emph{Secondary outputs:} final values of $M$, $\kappa$, $\hat{\zeta}$, $\tau$, and $T$;
\STATE Set
\[
\bH:=H+2\eps I, \quad \kappa := \frac{M+2\eps}{\eps}, \quad \hat{\zeta} := \frac{\zeta}{3 \kappa},
\quad
\tau := \frac{\sqrt{\kappa}}{\sqrt{\kappa}+1}, \quad
T:=\frac{4\kappa^4}{(1-\sqrt{\tau})^2};
\]
\STATE Set $y_0 \leftarrow 0$, $r_0 \leftarrow g$, $p_0 \leftarrow -g$, $j \leftarrow 0$; 
\IF{$p_0^\top \bH p_0 < \eps \|p_0\|^2$}
\STATE Set $d=p_0$ and terminate with d\_type=NC;
\ELSIF{$\|H p_0\| > M \|p_0\|$}
\STATE Set $M \leftarrow {\|H p_0\|}/{\|p_0\|}$ 
and update $\kappa,\hat{\zeta},\tau,T$ accordingly;
\ENDIF
\WHILE{TRUE}
\STATE $\alpha_j \leftarrow {r_j^\top r_j}/{p_j^\top \bH p_j}$;
\COMMENT{Begin Standard CG Operations}
\STATE $y_{j+1} \leftarrow y_j+\alpha_j p_j$;
\STATE $r_{j+1} \leftarrow r_j + \alpha_j \bH p_j$;
\STATE $\beta_{j+1} \leftarrow {(r_{j+1}^\top r_{j+1})}/{(r_j^\top r_j)}$;
\STATE $p_{j+1} \leftarrow  -r_{j+1} + \beta_{j+1}p_j$;
\COMMENT{End Standard CG Operations}
\STATE $j \leftarrow  j+1$;
\IF{$M < \max \left( \|Hp_j\|/\|p_j\|, \| Hy_j\|/\|y_j\|, \| Hr_j \|/\|r_j\| \right) $}
\STATE Set $M \leftarrow \max \left( \|Hp_j\|/\|p_j\|, \| Hy_j\|/\|y_j\|, \| Hr_j \|/\|r_j\| \right)  $
and update $\kappa,\hat{\zeta},\tau,T$ accordingly;
\ENDIF
\IF{$y_j^\top \bH y_j < \epsilon \|y_j\|^2$}
\STATE Set $d \leftarrow y_j$ and terminate with d\_type=NC;
\ELSIF{$\| r_j \| \le \hat{\zeta} \|r_0\|$}
\STATE Set $d \leftarrow y_j$ and terminate with d\_type=SOL;
\ELSIF{$p_j^\top \bH p_j < \eps \|p_j\|^2$}
\STATE Set $d \leftarrow p_j$ and terminate with d\_type=NC;
\ELSIF{$\|r_j\| >  \sqrt{T}\tau^{j/2} \|r_0\|$
}
\STATE Compute $\alpha_j,y_{j+1}$ as in the main loop above;
\STATE Find $i \in \{0,\dotsc,j-1\}$ such that
\begin{equation} \label{eq:weakcurvdir}
	\frac{(y_{j+1}-y_i)^\top \bH (y_{j+1}-y_i)}{\|y_{j+1}-y_i\|^2} \; < \; \eps;
\end{equation}
\STATE Set $d \leftarrow y_{j+1}-y_i$ and terminate with d\_type=NC;
\ENDIF
\ENDWHILE
\end{algorithmic}
\end{algorithm}

\section{Minimum eigenvalue oracle (MEO) procedure}\label{app: MEO}

The procedure shown as Procedure~\ref{alg:meo} is used to identify a direction of significant negative curvature, smaller  than a threshold $-\epsilon/2$ for a given $\epsilon>0$, or else return a certificate that all eigenvalues of $H$ are greater than $-\epsilon$. 
In the latter case, the certificate may be wrong, with probability up to a supplied tolerance $\delta$. 
This procedure is defined in \cite[Procedure~2]{Royer2019}, where a discussion of various possible implementations is given. 
The most interesting implementation for our purposes is a randomized Lanczos procedure, which performs a single  matrix-vector product involving $H$ at each of its iterations, and which finds the minimum eigenvalue of the projection of $H$ onto a Krylov subspace seeded by an initial random vector at each iteration.

\floatname{algorithm}{Procedure}

\begin{algorithm}[ht!]
\caption{Minimum Eigenvalue Oracle (MEO)}
\label{alg:meo}
\begin{algorithmic}
  \STATE \emph{Inputs:} Symmetric matrix $H \in \bR^{n \times n}$, tolerance $\epsilon>0$, error probability $\delta \in [0,1)$;
  \STATE \emph{Optional input:} Upper bound on Hessian norm $M$;
  \STATE \emph{Outputs:} An estimate $\lambda$ of $\lambda_{\min}(H)$ such that $\lambda \le - \epsilon/2$, and vector $v$ with $\|v\|=1$ such that $v^\top H v =\lambda$ {\bf OR} a certificate that $\lambda_{\min}(H) \ge -\epsilon$. 
  The probability that the certificate is issued but $\lambda_{\min}(H)<-\epsilon$ is at most $\delta$.
\end{algorithmic}
\end{algorithm}

Based on the discussion in \cite[Section~3.2]{Royer2019} and \cite[Assumption 3]{Royer2019}, we have the following result about bound on Hessian-vector products when a randomized Lanczos procedure (or a randomized CG) is used to implement Procedure~\ref{alg:meo}. 
\rev{
\begin{lemma}\label{lm:MEOoper.comp.}
We use a randomized Lanczos method with a starting vector uniformly generated on a unit sphere to implement Procedure~\ref{alg:meo}. Then given a failure probability $0 < \delta \ll 1$, Procedure~\ref{alg:meo} either certifies that $H \succeq -\epsilon I$ or finds a direction along which curvature of $H$ is smaller than $-\epsilon/2$ in at most $\Nmeo \triangleq \min \{ n, 1 + \lceil \Cmeo \epsilon^{-1/2} \rceil \}$
Hessian-vector products, where $\Cmeo = \log (2.75n/\delta^2)\sqrt{ \| H \| \max\{1,\epsilon^2\} }/2.$ \revn{If $\lambda_{\min}(H) < -\epsilon$, then with probability at most $\delta$, a certificate will be given.}
\end{lemma}}

\section{Two-sided bounds} \label{app: 2sidebds}
In this section we consider the two-sided bound-constrained optimization:
\begin{equation} \label{opt: gbc}
\min \, f(x)  \quad \mbox{s.t.} \;\;
x \in \Omega \triangleq \{x \in \bR^n \mid 0 \le x^i \le u^i, \; i \in \sI \}
\end{equation}
where $f: \bR^{n} \rightarrow \bR$ is twice continuously
differentiable and is bounded below by $\fl$ on the feasible region $\Omega$,
and $\sI \subset \{1,2,\dotsc,n\}$. We assume without loss of generality that $u^i > 0$ for all $ i \in \sI$. We allow $u_i = \infty$, that is, not all components $x_i$ for $i \in \sI$ have upper bounds.

Extending Definition~\ref{def:em2o}, we define approximate optimality for \eqref{opt: gbc} as follows.
\begin{definition}[$(\epsilon,p)$-2o of \eqref{opt: gbc}] We say that $x$ is an $(\epsilon,p)$-2o point for  \eqref{opt: gbc} if and only if
\begin{align*}
& 0 \le x^i \le u^i, \; i \in \sI, \; \| S \nabla f(x) \| \le 2\epsilon, \mbox{ and }
\begin{cases}
\nabla_i f(x) \ge -\epsilon^{3/4}, &  \; i \in \sI, x_k^i \le \sqrt{\epsilon}, \\
\nabla_i f(x) \le \epsilon^{3/4}, & \; i \in \sI, x_k^i \ge u^i - \sqrt{\epsilon},
\end{cases} \\
& S \nabla^2 f(x) S \succeq -
  \epsilon^p I,
\end{align*}
\end{definition}
where we define $J^+ \triangleq \{ i \in \sI \mid 0 \le x^i \le \sqrt{\epsilon} \mbox{ or } u^i - \sqrt{\epsilon} \le x^i \le u^i \}$, $J^- \triangleq \{ 1,\hdots,n \} \setminus J^+$,
$S = \diag({s})$, where
$s^i = \min\{  x^i, u^i - x^i \}$ if $i \in J^+$, and $s^i= 1$ if $i \in J^-$. 
Again, this definition reduces to Definition~\ref{def:em2o} when $u^i = +\infty$ for all $i \in \sI$, and can be motivated by exact (weak) second-order optimal conditions of \eqref{opt: gbc}.
The extension of projected Newton-CG (Algorithm~\ref{Alg: PNCG}) to the general bound-constrained optimization \eqref{opt: gbc} is relatively straightforward. 
We redefine the projection operator $P$, index sets $J_k^+$ and  $J_k^-$, and $S_k=\diag(s_k)$ as follows:
\begin{align*}
[P(x)]^i & \triangleq \begin{cases}
\mbox{mid}  (0,x^i,u^i) & \; i \in \sI, \\
x^i & \; \mbox{otherwise},
\end{cases} \\
J_k^+ & \triangleq \{ i \in \sI \mid 0 \le x_k^i \le \epsilon_k \mbox{ or } u^i - \epsilon_k \le x_k^i \le u^i \}, \\
J_k^- & \triangleq \{ 1,\hdots,n \} \setminus J_k^+ = \{ i \in \sI \mid \epsilon_k < x_k^i < u^i - \epsilon_k \} \cup \sI^c. \\
s_k^i & = \begin{cases} \min\{ x_k^i, u^i - x_k^i \}, & \mbox{if } i \in J_k^+, \\
1, & \mbox{otherwise.}
\end{cases}
\end{align*}
The definitions of $g^-_k$, $H^-_k$, $g^+_k$ and $S^+_k$,  are the same, modulo the redefined $P$, $J_k^+$, $J_k^-$, and $S_k$. 
For Algorithm~\ref{Alg: PNCG}, the only adjustment to be made is the conditions to trigger the gradient step, which become
\[
g_k^i < -\epsilon_k^{3/2}, x_k^i \le \epsilon_k, i \in \sI \mbox{ or } g_k^i > \epsilon_k^{3/2}, x_k^i \ge u^i - \epsilon_k, i \in \sI \mbox{ or } \| S^+_k g^+_k \| \ge \epsilon_k^2. \]
We make an additional assumption on $\epsilon_k$ that 
\[
2\epsilon_k \le u^i, \quad \forall k \ge 0, \ i \in \sI,
\]
and  assume that Assumption~\ref{Ass: comp.lev.set} and \ref{Ass: 2Lipstz} hold when $\Omega$ includes two-sided bounds.
It can then be verified that Lemmas~\ref{lm: ncdec},~\ref{lm: soldec} and~\ref{lm: meostep} still hold for the modified Algorithm~\ref{Alg: PNCG}. 
Lemma~\ref{lm: gradproj} also holds if the conditions to trigger the gradient step is tailored accordingly. Furthermore, if we let $\epsilon_k \equiv \epsilon_H = \sqrt{\epsilon}$ and $\epsilon_g = \epsilon$, then the Algorithm stops within the same number of iterations specified in Theorem~\ref{thm: PNCG.iter.comp} ($\sO(\epsilon^{-3/2})$) and locates an $x$ that is an $(\epsilon,1/2)$-2o point of \eqref{opt: gbc} with probability at least $(1-\delta)^{\Kpncg}$, where $\delta \in [0,1)$ is the probability of failure in Procedure~\ref{alg:meo}. Moreover, the complexity of fundamental operations (gradient evaluations or Hessian-vector products) is also $\tilde{O}(\epsilon^{-7/4})$.

\end{document}